\documentclass[12pt,oneside]{amsart}

\usepackage{amsthm,amsfonts,amsbsy,amssymb,amsmath,amscd}
\usepackage{amsmath}
\usepackage[colorinlistoftodos, textsize=tiny]{todonotes}
\def\listtodoname{List of Todos}
\def\listoftodos{\@starttoc{tdo}\listtodoname}

\usepackage[all]{xy}

\usepackage{hyperref} 
\usepackage{soul}
\usepackage{color} 
\usepackage{typearea} 

\setcounter{tocdepth}{1}

\usepackage{pdfcomment} 

\usepackage[mathscr]{eucal}
\usepackage{calrsfs}

\usepackage{verbatim}

\usepackage{tikz-cd}

\usepackage{pdfcomment} 

\setcounter{tocdepth}{1}

\theoremstyle{plain}
\newtheorem{teo}{Theorem}[section]
\newtheorem{thm}[teo]{Theorem}
\newtheorem{prop}[teo]{Proposition}
\newtheorem{lemma}[teo]{Lemma}
\newtheorem{cor}[teo]{Corollary}

\newtheorem{conj}[teo]{Conjecture}

\newtheorem{thm-defn}[teo]{Theorem-Definition}

\theoremstyle{Definition}
\newtheorem{defn}[teo]{Definition}

\theoremstyle{remark} 
\newtheorem{remark}[teo]{Remark}

\makeatletter
\newcommand{\neutralize}[1]{\expandafter\let\csname c@#1\endcsname\count@}
\makeatother

\numberwithin{equation}{section}

\newcommand{\IQbar}{\overline{\mathbb{Q}}}

\newcommand{\BC}{{\mathbb {C}}}

\newcommand{\BE}{{\mathbb {E}}}

\newcommand{\BG}{{\mathbb {G}}}
\newcommand{\BH}{{\mathbb {H}}}

\newcommand{\BM}{{\mathbb {M}}}
\newcommand{\BN}{{\mathbb {N}}}

\newcommand{\BP}{{\mathbb {P}}}
\newcommand{\BQ}{{\mathbb {Q}}}
\newcommand{\BR}{{\mathbb {R}}}
\newcommand{\BS}{{\mathbb {S}}}

\newcommand{\BV}{{\mathbb {V}}}

\newcommand{\BZ}{{\mathbb {Z}}}

\newcommand{\CB}{{\mathcal {B}}}

\newcommand{\CE}{{\mathcal {E}}}
\newcommand{\CF}{{\mathcal {F}}}

\newcommand{\CJ}{{\mathcal {J}}}

\newcommand{\CL}{{\mathcal {L}}}
\newcommand{\CM}{{\mathcal {M}}}
\newcommand{\CN}{{\mathcal {N}}}
\newcommand{\CO}{{\mathcal {O}}}
\newcommand{\CP}{{\mathcal {P}}}
\newcommand{\CQ}{{\mathcal {Q}}}

\newcommand{\CX}{{\mathcal {X}}}

\newcommand{\CZ}{{\mathcal {Z}}}

\newcommand{\RN}{{\mathrm {N}}}

\newcommand{\sC}{{\mathscr{C}}}
\newcommand{\sD}{{\mathscr{D}}}

\newcommand{\sF}{{\mathscr {F}}}

\newcommand{\sJ}{{\mathscr {J}}}

\newcommand{\sM}{{\mathscr {M}}}

\newcommand{\sO}{{\mathscr {O}}}
\newcommand{\sP}{{\mathscr {P}}}
\newcommand{\sQ}{{\mathscr {Q}}}

\newcommand{\sV}{{\mathscr {V}}}

\newcommand{\AJ}{{\mathrm{AJ}}}

\newcommand{\amp}{{\mathrm{amp}}}

\newcommand{\Aut}{{\mathrm{Aut}}}

\newcommand{\Ce}{{\mathrm{Ce}}}
\newcommand{\Ch}{{\mathrm{Ch}}}

\renewcommand{\div}{{\mathrm{div}}}

\newcommand{\Fal}{{\mathrm{Fal}}}

\newcommand{\GS}{{\mathrm{GS}}}

\newcommand{\Jac}{{\mathrm{Jac}}}

\newcommand{\NT}{{\mathrm{NT}}}

\newcommand{\Pic}{\mathrm{Pic}}

\newcommand{\Sp}{\mathrm{Sp}}

\newcommand{\vol}{{\mathrm{vol}}}

\newcommand{\wt}{\widetilde}
\newcommand{\pp}{\frac{\partial\bar\partial}{\pi i}}

\newcommand{\pair}[1]{\langle {#1} \rangle}

\newcommand\wh{\widehat}

\newcommand{\lra}{{\longrightarrow}}
\newcommand{\iso}{{\overset\sim\lra}}


\renewcommand{\cong}{\simeq}

%


\begin{document}
\title{Heights of Ceresa and Gross--Schoen cycles}
\author{Ziyang Gao, Shou-Wu Zhang}

\address{Department of Mathematics, UCLA, Los Angeles, CA 90095, USA}
\email{ziyang.gao@math.ucla.edu}

\address{Department of Mathematics, Princeton University, Princeton, NJ 08544, USA}
\email{shouwu@princeton.edu}






\maketitle

{\centering\footnotesize \textit{Dedicated to Gerd Faltings's 70th birthday}\par}

 \begin{abstract}
 We study the Beilinson--Bloch heights of Ceresa and Gross--Schoen cycles in families. We construct that for any  $g\ge 3$, a Zariski open dense subset $\sM_g^\amp$ of $\sM_g$, the coarse moduli of curves of genus $g$ over $\BQ$, such that the heights of Ceresa cycles and Gross--Schoen cycles over $\sM_g^\amp$ have a lower bound and satisfy the Northcott property.
\end{abstract}

\tableofcontents

\section{Introduction}

For a curve over $\BC$, the classical Abel--Jacobi  map is a bijection 
$$\AJ \colon \Pic ^0(C)\iso \Jac (C).$$ When $C$ is defined over a number field $k$,  one can define the N\'eron--Tate height on $\Jac (C)(\bar k)$ using the canonical polarization. This height has a Northcott property, which gives positivity of the height pairing, which in turn is an important ingredient for
 the proof of the Mordell--Weil theorem and the formulation of the Birch and Swinnerton-Dyer conjecture. Notice that  this positivity is also the crucial  part of  the Hodge index theorem of Faltings and Hriljac about arithmetic intersection pairings  on integral models of $C$.
 
 In the 1950s-1960s, Weil and Griffiths extended the Abel--Jacobi map  to homologically trivial cycles (say of codimension $n$) on smooth and projective varieties $X$ over $\BC$ 
 $$\AJ ^n \colon \Ch^n(X)_{\mathrm{hom}}\lra \Jac ^n (X)$$
 where  $\Jac ^n (X)$ are called \textit{intermediate Jacobians} and are only complex tori in general.
 In the 1980s, Beilinson \cite{Be} and Bloch \cite{Bl} independently proposed a conditional definition of heights for these cycles using integral models. 
 They conjectured the positivity of their heights, an extension of the Mordell--Weil theorem, and an extension of the BSD conjecture. However, little is known for any family, and the positivity cannot be proved via a  Northcott property because of the lack of algebraicity of $\Jac ^n (X)$.
 
 \medskip
 This paper aims to study the positivity of the heights of some special cycles constructed by Ceresa in 1983
 and by Gross and Schoen in 1995. The heights of these cycles have important connections to diophantine geometry and special values of $L$-series. 
 We prove the {\it generic positivity} of the Beilinson--Bloch heights for these cycles, including a height inequality and Northcott property.  We will also formulate some general conjectures to extend these results to general families of homologically trivial cycles. 
 
 \subsection{Ceresa  cycles and Gross--Schoen cycles}
 We start with a brief history of the study of Ceresa cycles and Gross--Schoen cycles.
 In 1983, Ceresa \cite{Ceresa} constructed surprisingly simple 1-cycles in the Jacobians of curves that have caught much attention in recent years. More precisely, 
 let $C/k$ be a smooth and proper curve over a field $k$ of genus $g\ge 2$ and $e\in \Ch ^1(C)$ a divisor class of degree $1$.
 Then we have an embedding $i_e \colon C\lra \Jac (C)$ by sending $x\in C$ to the class of $x-e$.  Then the Ceresa cycle $\Ce(e)$ is defined as the 
 cycle $$\Ce (C, e):=\iota_e (C)-[-1]_*\iota _e(C)\in \Ch^{g-1}(\Jac (C)).$$
 This class is homologically trivial. Its class modulo algebraic equivalence does not depend on the choice of $e$. 
 The main result of Ceresa \cite{Ceresa} is that for a general $C$  defined over complex numbers, $\Ce (C, e)$ is not algebraically equivalent to zero if $g(C)\ge 3$. 
 Notice that when $g=2$, or more generally, any hyperelliptic curve,the class $\Ce (C, e)$ is always algebraically trivial because of the identity
 $$\Ce (C, e)=i _e(C)-i _{e'}(C)$$  induced by the hyperelliptic involution $x\mapsto x'$. In 2004, Reed and Hain \cite{HR04}
 studied these cycles in families following Hain's theory biextensions \cite{Ha90}. More precisely, they construct a metrized line bundle $\bar \CB$ on the coarse moduli $\sM_g$ \cite{HR04} and compare it with other topological line bundles on $\sM_g$. As a consequence, they can show that $\Ce(C, e)$ is not torsion if $k$ is the function field of {\em proper curve} in $\sM_g$ over $\BC$.
 
In arithmetic geometry and number theory, it is often preferable to study another $1$-cycle on $C^3$, constructed by Gross and Schoen \cite{GS95}, which turns out to share some common properties with the Ceresa cycles. More precycles, we have 7 partial cycles  $\Delta _I$ in $C^I$  indexed by non-empt subsect $I$ of $\{1, 2, 3\}$. Modified by the base class $e\in \Ch ^1(C)$ we obtain the Gross--Schoen class
\begin{align*}\GS (C, e):=\Delta_{123}&-p_{12}^*\Delta_{12}\cdot p_3^*e-p_{23}^*\Delta_{23}\cdot p_1^*e-p_{13}^*\Delta_{13}\cdot p_2^*e\\
& +p_1^*e\cdot p_2^*e+p_2^*e\cdot p_3^*e+p_1^*e\cdot p_3^* e \in \mathrm{Ch}^2(C^3).
\end{align*}
 where $p_I \colon C^3\lra C^I$ and $p_j \colon C^3\lra C$ are the projections.  The main results of Gross and Schoen \cite{GS95} are that these classes are homologically trivial with well-defined Beilinson--Bloch heights when the base field $k$ is a number field. The main motivation of  in \cite{GS95} is the early paper by 
 Gross and Kudla \cite{Gross-Kudla} in which they propose a conjecture to relate the heights of these cycles to the triple product  $L$-series. 
 The positivity of $\pair{\GS(C, e), \GS(C, e)}_{\mathrm{BB}}$ is then a consequence of the generalized Riemann hypothesis of the triple product $L$-series.
 See \cite{YZZ} for recent progress on this conjecture.  It is known that  the height $\pair{\Ce(e), \Ce(e)}_{\mathrm{BB}}$ is proportional to $\pair{\GS(C, e), \GS(C, e)}_{\mathrm{BB}}$ with the ratio a positive constant depending only on $g(C)$, and that $\Ce(e)$ in torsion in $\mathrm{Ch}^2(C^3)$ if and only if $\GS(C,e)$ is torsion in $\mathrm{Ch}^{g-1}(\mathrm{Jac}(C))$; see \cite{Zh10}.
 
 In 2010, when the base field $k$ is a number field,  the second-named author in \cite{Zh10} proved an identity between 
 heights $\pair{\GS(C, e), \GS(C, e)}$  of  Gross--Schoen cycles, and the self-intersection $\overline{\omega} _{\mathrm{a}}^2$ of the admissible dualising sheaf of $C$:
 $$\pair{\GS(C, e), \GS(C, e)}_{\mathrm{BB}}=\frac {2g+1}{2g-2}\overline{\omega} _{\mathrm{a}}^2+6(g-1)\pair{e-\xi, e-\xi}_\NT-\sum_v \varphi (C_v)\deg v$$
 where $\xi\in \Pic ^1(C)$ such that $(2g-2)\xi=\omega_C$, the canonical class of $C$, and $\varphi(C_v)$ are some local invariants $C_v:=C\otimes K_v$ at places $v$ of $K$. 
Thus $\pair{\GS(C, e), \GS(C, e)}_{\mathrm{BB}}$ reaches its minimum when $e=\xi$. 
 A major application of  the conjectured positivity of $\pair{\GS(C, \xi), \GS(C, \xi)}_{\mathrm{BB}}$ is that it will give an effective version of the Bogomolov conjecture proved by  Ullmo \cite{Ull98}. 
 This idea  has led to some unconditional effective Bogomolov conjectures by R. de Jong \cite{dJ18a} and Wilm \cite{Wil},
 and Yuan's new proof \cite{YuanArithBig} of a uniform Bogomolov result (with generalization) originally obtained by Dimitrov--Gao--Habegger \cite{DGH} and K\"{u}hne \cite{Kuhne}.
 
In \cite{Zh10}, the Northcott property was established for $\pair{\GS(C, \xi), \GS(C, \xi)}_{\mathrm{BB}}$ for $C$ in {\it proper} subvarieties of $\sM_g$. A main goal of this paper is to extend this Northcott property to {\it non-empty open} subvarieties of $\sM_g$ when $g\ge 3$. As a consequence this proves the generic positivity for $\pair{\GS(C, \xi), \GS(C, \xi)}_{\mathrm{BB}}$.
 
\subsection{Main results}\label{SubsectionMainResultsGSCe}
Our main result is the generic positivity and Northcott property for the Beilinson--Bloch heights of the Gross--Schoen cycles and Cerasa cycles parametrized by moduli of curves. 
 These are the simplest situations where the Beilinson--Bloch heights are unconditionally defined \cite{GS95, Ku}.

Let $g\ge 3$ and  $\sM_g$ be the  moduli space  of the pairs $(C, \xi)$ of a smooth projective curve $C$ of genus $g$ and a class $\xi\in \Pic ^1(C)$ 
so that $(2g-2)\xi=\omega_C$, the canconial class of $C$. Over $\BZ[1/(2g-2)]$, this is an \'etale cover the moduli stack $\BM_g$ of curves of genus $g$ of degree 
$(2g-2)^{2g}$. Let $\sC_g/\sM_g$ be the universal curve, and let 
 $\pi \colon \Pic^1(\sC_g/\sM_g)\lra \sM_g$ be  the relative Picard group of degree $1$ on $\sC_g/\sM_g$. 
 Thus $\Pic^1(\sC/\sM)$ paremetrizes the  triples $(C, \xi, e)$ with $(C, \xi)\in \sM_g$ and $e\in\Pic ^1(C)$.
 
For each $s \in \sM_g(\BC)$ parametrizes  $(C_s, \xi _s)$, let $\GS (s)$ and $\Ce (s)$ denote the corresponding cycles $\GS (C_s, s)$ and $\Ce (C_s, s)$ respectively.  
For each $s\in \Pic ^1(\sC/\sM_g)$ paramteizes the triple $(C_s, s, e_s)$, let  $\GS (s)$ and $\Ce (s)$ denote the cycles  $\GS (C_s, e_s)$ and $\Ce(C_s, e_s)$ respectively.


\begin{thm}\label{MainTheorem}
For each $g\ge 3$, there exist a Zariski open dense subset $U$ of $\sM_g$ defined over $\BQ$ and  positive numbers $\epsilon$ and $c$, such that for any $s\in \pi^{-1}(U)(\IQbar)$, we have 
\begin{equation}\label{EqGSComparison}
\pair{\GS(s),  \GS(s)}_{\mathrm{BB}}\ge \epsilon (h_\Fal (C_s)+h_{\NT} (e_s-\xi_s))-c,
\end{equation}
$$\pair{\Ce(s),  \Ce(s)}_{\mathrm{BB}}\ge \epsilon (h_\Fal(C_s)+h_{\NT} (e_s-\xi_s))-c,$$
where $h_\Fal$ is the Faltings height of $C_s$,  and $h_\NT$ is the canonical N\'eron--Tate height on $\Jac (\sC_s)(\IQbar)$.
\end{thm}

Theorem~\ref{MainTheorem} yields the following corollary immediately.
\begin{cor}\label{CorNorthcott}
For each $g\ge 3$ and the Zariski open dense subset $U$ of the $\sM_g$ defined over $\BQ$ as in Theorem~\ref{MainTheorem}, we have 
\begin{itemize}
\item[-] (lower bound) There exists a number $B=B(g)$ such that 
\[
\pair{\GS(s),  \GS(s)}_{\mathrm{BB}} > B\qquad\text{ and }\qquad \pair{\Ce(s),  \Ce(s)}_{\mathrm{BB}} > B
\]
 for all $s\in \pi^{-1}(U) (\IQbar)$.
\item[-] (Northcott property) For any $H, D\in \BR$, we have
\begin{align*}
\qquad\qquad  \#\{s \in \pi^{-1}(U) (\IQbar) : \quad \deg [\BQ(s):\BQ]<D, \quad \pair{\GS(s),  \GS(s)}_{\mathrm{BB}}<H\} < \infty, \\
\qquad \#\{s\in \pi ^{-1}(U) (\IQbar) : \quad \deg [\BQ(s):\BQ]<D, \quad \pair{\Ce(s),  \Ce(s)}_{\mathrm{BB}}<H\}<\infty.
\end{align*}
In particular for each number field $K$, there exist at most finitely many $s \in  \pi^{-1}(U)(K)$ such that $\pair{\GS(s),  \GS(s)}_{\mathrm{BB}}$ or 
$\pair{\Ce(s),  \Ce(s)}_{\mathrm{BB}}$ is negative.
\end{itemize}
\end{cor}

Before our work, de Jong \cite{dJ18}  constructed a family of genus $3$ curves with unbounded Beilinson--Bloch height, and proved some results over function fields with Shokrieh \cite{dJS}.

\begin{remark}\label{RemarkMainTheorem}
In practice, we wish to determine the subset $U$ in Theorem~\ref{MainTheorem}. If \eqref{EqGSComparison} holds over Zariski open dense subsets $U_1$ and $U_2$ of $\sM_g$, 
then it also holds over $U_1\cup U_2$. Thus, there is the maximal open subset of $\sM_g$ over $\BQ$ so that \eqref{EqGSComparison} holds. 
We call this maximal open subset the {\it ample locus} and denote it by $\sM_g^{\mathrm{amp}}$. 
We shall prove that its complement $\sM_g \setminus \sM_g^{\mathrm{amp}}$ is precisely the Betti stratum. In particular $\mathrm{GS}(s)$ and $\mathrm{Ce}(s)$ are non-torsion for every non-$\IQbar$ point $s$ in $\sM_g^{\mathrm{amp}}$. 
See  Theorem~\ref{ThmDescriptionAmpleLocus}.
\end{remark}

\subsection{Sketch of proof}

The first key ingredient of the proof is the theory of adelic line bundles for quasi-projective varieties \cite{YZ21}. 

In our paper, we need to  construct a suitable adelic line bundle $\overline{\CL}$ on $\sM_g$ which defines the Beilinson--Bloch height on $\sM_g$ and {\it whose curvature form is semi-positive}.   More precisely, we prove:
\begin{thm}[Theorem~\ref{ThmZhangBB} and \ref{PropTwoCurvatureEqualAdeGS}]\label{ThmZhangBBIntro}
There exists an integrable $\BQ$-adelic line bundle $\overline{\CL}$ on $\sM_g$ such that $h_{\overline{\CL}}(s) = \pair{\GS(\xi_s),  \GS(\xi_s)}_{\mathrm{BB}}$ for all $s \in \sM_g(\IQbar)$. Moreover, its curvature form $c_1(\overline{\CL})$ is semi-positive.
\end{thm}
A construction of such an $\overline{\CL}$ was obtained by Yuan \cite[$\S$3.3.4]{YuanArithBig} without proving $c_1(\overline{\CL})\ge 0$. Our construction is different. We use the canonical polarized dynamical system on $\mathrm{Jac}(\sC_g/\sM_g)$, and $\overline{\CL}$ is ultimately obtained from a suitable Deligne pairing. However, in both constructions one cannot obtain nefness condition on $\overline{\CL}$. This is a major obstacle to study the family of Gross--Schoen cycles compared to the N\'{e}ron--Tate family. Showing that our $h_{\overline{\CL}}$ is precisely the Beilinson--Bloch height uses the computation from the second-named author's \cite[Thm.~2.3.5]{Zh10}. The semi-positivity of $c_1(\overline{\CL})$ is established by relating it to the Betti form, which we will explain later.

With Theorem~\ref{ThmZhangBBIntro} in hand, the desired height comparison \eqref{EqGSComparison} in Theorem~\ref{MainTheorem} (with $U$ defined over $\IQbar$ instead of $\BQ$) is equivalent to the bigness of the generic fiber (sometimes called the geometric part) $\wt{\CL}$ of $\overline{\CL}$, \textit{i.e.} to show that $\widehat{\vol}(\wt{\CL})>0$. 

A main theorem on the arithmetic part is the following {\it volume identity}.
\begin{thm}[Theorem~\ref{PropHSforGS}]\label{ThmVolumeIdentityIntro}
We have
\begin{equation}\label{EqVolumeIdentityIntro}
\widehat{\vol}(\wt{\CL}) = \int_{\sM_g(\BC)} c_1(\overline{\CL})^{\wedge \dim \sM_g}.
\end{equation}
\end{thm} 
In fact, we prove this volume identity for any subvariety $S$ of $\sM_{g,\BC}$. This stronger version is needed to descend $U$ from $\IQbar$ to $\BQ$ in Theorem~\ref{MainTheorem}; see Corollary~\ref{cor-rationality}.

It is known \cite[Lem.~5.4.4]{YZ21} that the right hand side of the volume identity equals the top self-intersection number of $\wt{\CL}$. Hence the volume identity is in the flavor of the arithmetic Hilbert--Samuel theorem. However, arithmetic Hilbert--Samuel is not applicable in our situation because we do not  know how to prove the nefness $\wt{\CL}$ (for definition of nefness see \cite[$\S$2.5.2 and 2.5.3]{YZ21}). Instead, we construct a sequence of model Hermitian line bundles $\overline{\CL}_i$ (on some projective arithmetic models of $\sM_g$) converging to $\overline{\CL}$ and prove the volume identity for each $\overline{\CL}_i$. The proof relies heavily our concrete construction of $\overline{\CL}$, and a crucial ingredient is Demailly's Morse Inequality \cite{De91}.

Hain's works on the 
 Archimedean local height pairing and the Betti form are the key ingredients to 
 bridging the arithmetic arguments above to a geometric statement. 
Indeed, he proved that the archimedean local height pairing could be computed using the metrized biextension line bundle \cite{Ha90}, and that the curvature form of the metrized biextension line bundle, called the {\em Betti form}, is a semi-positive $(1,1)$-form \cite{HR04, Ha13}.

In our case of the Gross--Schoen cycles, 
we show in Proposition~\ref{PropTwoCurvatureEqualAdeGS} that $c_1(\overline{\CL})$ is precisely the Betti form $\beta_{\GS}$. This result can also be deduced from R.~de Jong's \cite[(9.3)]{dJ16}. Thus, roughly speaking by the volume identity \eqref{EqVolumeIdentityIntro} and its stronger form, we are left to prove that   the volume form 
$\beta_{\GS}^{\wedge \dim \sM_g}$ {\it does not vanish identically, and its vanishing locus is Zariski closed} subset of $\sM_{g,\BC}$. More precisely for any subvariety $S \subseteq \sM_{g,\BC}$, we define the {\it Betti stratum} $S^{\mathrm{Betti}}(1)$ (see above Lemma~\ref{LemmaBettiFoliationBettiRank}) which is a real analytic subset of $S$ satisfying the following property: $\beta_{\GS}|_{S}^{\wedge \dim S} \equiv 0$  if and only if $S^{\mathrm{Betti}}(1) = S$. We prove:

\begin{thm}[Theorem~\ref{ThmBettiRankFormulaMain} and \ref{ThmZariskiClosedDegLoci}]\label{ThmBettiRankIntro}
For $g \ge 3$, $\beta_{\GS}^{\wedge \dim \sM_g}$ does not vanish identically. 

Moreover for any subvariety $S \subseteq \sM_{g,\BC}$, $S^{\mathrm{Betti}}(1)$ is Zariski closed in $S$.
\end{thm}
Hain \cite{Ha24} also proved $\beta_{\GS}^{\wedge \dim \sM_g} \not\equiv 0$ for $g\ge 3$ in a different way without showing the Zariski closedness of the Betti stratum $\sM_g^{\mathrm{Betti}}(1)$.  
Our approach can in fact be applied to any subvariety $S \subseteq \sM_g$ to get a checkable criterion \eqref{EqCritBettiRank} for $\beta_{\GS}|_S^{\wedge \dim S} \equiv 0$, and more generally to any family (see Theorem~\ref{ThmBettiRankGeneralFamily} and \ref{ThmBettiStrataZarClosedGeneralFamily}). Our proof  of Theorem~\ref{ThmBettiRankIntro}
is inspired by the first-named author's \cite{GaoBettiRank}, and the key ingredients are the mixed Ax--Schanuel theorem \cite{KennethChiuAS, GKAS} and the o-minimal structure associated with the period map \cite{BBKT}.

\subsection{Organization of the paper}
In \S\ref{SectionArchimedeanLocalHeight}, we review  Hain's work on Archimedean local height pairings for a general family of homologically trivial cycles. We then focus on the Gross--Schoen and Ceresa cycles in  \S\ref{SectionGSCe}. 
In  \S\ref{SectionArithmetic} and  \S\ref{SectionVolIdentity},  we explain how to construct the desired adelic line bundle and prove the semipositivity Theorem \ref{ThmZhangBBIntro}, 
and prove the volume identity  Theorem \ref{ThmVolumeIdentityIntro}. 

We then recall in $\S$\ref{SectionPeriodMap} the definition of the period map associated with a normal function  and explain how to see the Betti foliation in this language.  With these preparations we prove Theorem~\ref{ThmBettiRankIntro} in $\S$\ref{SectionBettiRank}, which is the core of the geometric part of our proof.

We then prove the main results of our paper in $\S$\ref{SubsectionBignessProof}, where we prove Theorem~\ref{MainTheorem} and the claims in Remark~\ref{RemarkMainTheorem}.
In $\S$\ref{SectionFuture}, we explain our project on studying the positivity of Beilinson--Bloch heights for other families.

To make our paper more self-contained and for readers' convenience, we include an Appendix~\ref{AppVMHS} which collects some definitions about mixed Hodge structures and their variations. 

\subsection*{Acknowledgements}
We would like to thank Ari Shnidman for asking us the question about the Northcott property for Ceresa cycles, 
Richard Hain for numerous inspiring discussions and for sharing his notes, Xinyi Yuan for his help during the preparation of this project and comments on a previous version of the paper, Dick Gross for suggesting we look at $\Pic^1(\sC_g/\sM_g)$ instead of just $\sM_g$, and Peter Sarnak for asking the question about the positivity of the Beilinson--Bloch heights. 
Part of this work was completed when ZG visited SZ at Princeton University in January, February, and May 2024. ZG would like to thank Princeton University for its hospitality. 


Ziyang Gao received funding from the European Research Council (ERC) under the European Union’s Horizon 2020 research and innovation program (grant agreement n$^{\circ}$ 945714). Shouwu Zhang is supported by an NSF award, DMS-2101787.

\section{Archimedean local height pairing and metrized Poincar\'{e} bundle}\label{SectionArchimedeanLocalHeight}
In this section, we recall some results of Hain on the archimedean local heights, which works for any family of homologically trivial cycles.

Let $S$ be a smooth irreducible quasi-projective variety.  Let $f \colon X \rightarrow S$ be a smooth projective morphism of algebraic varieties with irreducible fibers of dimension $d$, defined over $\BC$.
Let $p\le d$ be a non-negative integer. Let $Z$ be a family of homologically trivial cycles of codimension $p$ in $X/S$. Namely $Z$ is a formal sum of integral subschemes of $X$ which are flat and dominant over $S$ such that each fiber $Z_s$ is a homologically trivial cocycle of codimension $p$ in $X_s$, \textit{i.e.} $Z_s$ is in the kernel of the cycle map 
$$Z^p(X_s)  \rightarrow H^{2p}(X_s,\BZ(p)).$$

\subsection{Height pairing at Archimedean places}\label{SubsectionHeightPairingPoincare}
Let $q:=d+1-p$. 
Let  $W$ be a family of homologically trivial cycles of codimension $q$ in $X/S$. Assume $Z$ and $W$ have disjoint supports over the generic fiber, then up to replacing $S$ by a Zariski open dense subset, we may assume that  $Z_s$ and $W_s$ have disjoint supports for all $s \in S(\BC)$.

In Arakelov's theory, the local height pairing  at archimedean places in this context is defined by 
\begin{equation}\label{EqLocalHeightPairing}
\langle Z_s, W_s \rangle_{\infty} = \int_{X_s} g_{W_s}\delta_{Z_s}.
\end{equation}
where $g_{W_s}$ is a Green's current for $W_s$, namely a current satisfying the equation:
$$\pp g_{W_s}=\delta _{W_s}.$$
This pairing is symmetric.

Hain \cite[$\S$3.3]{Ha90} related the local height pairing at archimedean places \eqref{EqLocalHeightPairing} to the metrized Poincar\'{e} bundle. We will review this in the next subsection.




\subsection{Intermediate Jacobian and the normal function associated with $Z$}\label{SubsectionANF}
 For each $s \in S(\BC)$, the cohomology group $H^{2p-1}(X_s,\BZ)$ is endowed with a natural Hodge structure of weight $2p-1$, with the Hodge filtration 
 $$H^{2p-1}(X_s,\BC) \supsetneq \cdots \supsetneq F^p_s \supsetneq F^{p+1}_s \supsetneq \cdots \supsetneq 0$$
  where $F^m_s$ is the $\BC$-subspace of $H^{2p-1}(X_s,\BC) \cong H^{2p-1}_{\mathrm{dR}}(X_s)$ generated by the $(r,r')$-forms with $r \ge m$. This makes $R^{2p-1} f_* \BZ_{X}$ into a variation of Hodge structures on $S$ of weight $2p-1$.  
Applying the Tate twist we obtain a variation of Hodge structures 
\[
\BV_{\BZ}:=R^{2p-1}f_*\BZ(p)_{X}
\]
 of weight $-1$. Denote by $\sF^{\bullet}$ the Hodge filtration; it is the decreasing chain 
 $$\cdots \supsetneq \sF^0  \supsetneq \sF^1  \supsetneq \cdots$$ of holomorphic subbundles of the vector bundle $\sV := \BV_{\BZ} \otimes_{\BZ}\sO_S$ such that the fiber $\sF^m_s$ is the Tate twist of $F^{m+p}_s$ for each  $s \in S(\BC)$.

The {\it $p$-th intermediate Jacobian} of $X \rightarrow S$ is defined to be
\begin{equation}\label{EqIntermediateJacobianPth}
\sJ^p(X/S) := \BV_{\BZ} \backslash \sV / \sF^0.
\end{equation}
Then $\sJ^p(X/S) \rightarrow S$ is a holomorphic family of compact complex tori. If $S$ is a point, then we simply write $\sJ^p(X)$.

For each $s \in S(\BC)$, we have  $\sJ^p(X/S)_s = \sJ^p(X_s)$ and Griffiths \cite[$\mathsection$11]{GriffithsAJ} constructed the  {\em Abel--Jacobi map} 
\begin{equation}\label{EqAJmap}
\AJ \colon \mathrm{Ch}^p(X_s)_{\hom} \longrightarrow \sJ^p(X_s)
\end{equation}
where $\mathrm{Ch}^p(X_s)_{\hom}$ is the kernel of the cycle map $\mathrm{Ch}^p(X_s)  \rightarrow H^{2p}(X_s,\BZ)$. 
Hence the family $Z$  induces a holomorphic section
\begin{equation}\label{EqNormalFunctionFromFamily}
\nu_{Z} \colon S \rightarrow \sJ^p(X/S), \qquad s \mapsto \AJ(Z_s). 
\end{equation}
We call this $\nu_Z$ the {\it admissible normal function} associated with $Z$.  

\subsection{Betti foliation}\label{SubsectionBettiFoliation1}
A key object is the {\it Betti foliation} $\CF_{\mathrm{Betti}}$ on $\sJ^p(X/S)$ defined as follows. The fiberwise  isomorphism $\BV_{\BR,s} \iso  \BV_{\BC,s}/\sF^0_s = \sV_s/\sF^0_s$ makes  
$\sJ^p(X/S)$ into a local system of real tori
\begin{equation}\label{EqLocSysTori}
\BV_\BR/\BV_\BZ \xrightarrow{\sim} \sJ^p(X/S).
\end{equation}
The induced foliation is the Betti foliation $\CF_{\mathrm{Betti}}$. More precisely, for any point $x\in \sJ^p(X/S)$, there is a local section  $\sigma \colon U \lra \sJ^p(X/S)$ 
from a neighborhood $U$ of $\pi(x)$ in $S^{\mathrm{an}}$, with $x \in \sigma(U)$,  represented by a section of the local system $\BV_\BR$. 
 The manifolds $\sigma (U)$ gluing together to a foliation $\CF_{\mathrm{Betti}}$ on $\sJ^p(X/S)$.  In particular, all torsion multi-sections are leaves of $\CF_{\mathrm{Betti}}$.

\subsection{Metrized Poincar\'{e} bundle and metrized tautological bundle}
The torus fibrations $\sJ^p(X/S)$ and $\sJ^q(X/S)$ are dual to each other by Poincar\'{e} duality. 
The general theory of biextension says that there exists a unique line bundle $\sP \rightarrow \sJ^p(X/S) \times_S\sJ^q(X/S)$ 
with a rigidification 
 $$\epsilon^*\sP \cong \sO_S$$
  for the zero section $\epsilon$ of $ \sJ^p(X/S) \times_S\sJ^q(X/S) \rightarrow S$, such that 
 over each $s \in S(\BC)$, $\sP|_{\sJ^p(X_s) \times \{\lambda\}}$ represents $\lambda \in \mathrm{Pic}^0(\sJ^p(X_s)) = \sJ^p(X_s)^\vee = \sJ^q(X_s)$.

Moreover, $\sP$ can be endowed a canonical Hermitian metric $\| \cdot \|_{\mathrm{can}}$ uniquely determined by the following properties:
\begin{enumerate}
\item   the curvature of $\overline{\sP}:=(\sP, \| \cdot \|_{\mathrm{can}})$ is translation invariant on each fiber of $ \sJ^p(X_s) \times_S\sJ^q(X_s)$; 
\item  $\epsilon^* \overline{\sP} \cong (\sO_S, \| \cdot \|_{\mathrm{triv}})$ for the trivial metric on $\sO_S$. 
\end{enumerate}

\begin{defn}\label{DefnMetrizedPoincare1}
The metrized line bundle $\overline{\sP} := (\sP, \| \cdot \|_{\mathrm{can}})$ is called the {\em metrized Poincar\'{e}  bundle}.
\end{defn}

Let $Z$ and $W$ be family of cycles on $X$ over $S$ or codimension $p$ and $q$ respectively. Then  we obtain a metrized line bundle $(\nu_Z,\nu_W)^*\overline{\sP}$ on $S$. Use $\| \cdot \|$ to denote this induced metric on $(\nu_Z,\nu_W)^*\sP$.

 When $Z, W$ are disjoint over the generic point $s$ of $S$, Hain \cite[Prop.~3.3.2]{Ha90} constructed a non-zero section $\beta_{Z,W}$ of the line bundle $(\nu_Z,\nu_W)^*\sP \rightarrow S$ in view of the Hodge-theoretic construction of $\sP$ in \cite[$\S$3.2]{Ha90}, and  proved \cite[Prop.~3.3.12]{Ha90}  $\log \| \beta_{Z,W}(s) \| = -\int_{X_s} g_{W_s}\delta _{Z_s}$ for all $s \in S(\BC)$. To summarize, we have
\begin{prop}[Hain]\label{PropHainLocalHeight}
We have $- \log \| \beta_{Z,W}(s) \| = \langle Z_s, W_s \rangle_{\infty}$ for all $s\in S(\BC)$.
\end{prop}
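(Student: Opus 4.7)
The proof is that of Hain \cite[Prop.~3.3.12]{Ha90}; let me outline the strategy. The plan is to interpret both sides of the claimed identity in terms of the same biextension mixed Hodge structure and then read off the equality from the canonical real-analytic trivialization $\CB(\BV_{\BR,s})\cong \BR$.

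First I would recall the Hodge-theoretic construction of the section $\beta_{Z,W}$ described above: for each $s$, the pair $(Z_s,W_s)$ of homologically trivial cycles with disjoint supports determines, via the relative cohomology of the complement $X_s\setminus |Z_s|$ with closed support along $|W_s|$ (equivalently, via Deligne's construction of extensions from cycle classes), a biextension mixed Hodge structure $B(Z_s,W_s)\in \CB(\BV_{\BZ,s})$ whose image in $\sJ^p(X_s)\times\sJ^q(X_s)$ under the forgetful map recovers $(\AJ(Z_s),\AJ(W_s))=(\nu_Z(s),\nu_W(s))$. Since $\sP^*_s=\CB(\BV_{\BZ,s})$ as a $\BG_m$-torsor over $\sJ^p(X_s)\times\sJ^q(X_s)$, the point $B(Z_s,W_s)$ is exactly the section $\beta_{Z,W}(s)$, and by the definition of the canonical metric one has
\[
-\log\|\beta_{Z,W}(s)\| \;=\; -\mathrm{f}_s\bigl(B(Z_s,W_s)\bigr),
\]
where $\mathrm{f}_s\colon \CB(\BV_{\BZ,s})\to\CB(\BV_{\BR,s})=\BR$ is the forgetful map.

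Second, I would compute the real number $\mathrm{f}_s(B(Z_s,W_s))$ directly from the construction of the biextension. The biextension splits over $\BR$ in two canonical ways: using the real Hodge structure on $B$ (i.e.\ the weight filtration splits compatibly with $F^{\bullet}\cap\overline{F^{\bullet}}$) and using the real weight-graded splitting. The obstruction to these two splittings coinciding is an element of $\Hom_{\BR}(\BR(0),\BR(1))=\BR$, which by definition is $\mathrm{f}_s(B(Z_s,W_s))$.
Now unwinding the de Rham representative of $B(Z_s,W_s)$: one chooses a Green current $\eta_{W_s}$ for $W_s$ and a chain $\Gamma_{Z_s}$ with $\partial \Gamma_{Z_s}=Z_s$; the extension class data of $B(Z_s,W_s)$ is then represented by the pair $(\eta_{W_s},\Gamma_{Z_s})$, and the obstruction computation yields
\[
\mathrm{f}_s\bigl(B(Z_s,W_s)\bigr)\;=\;-\int_{Z_s}\eta_{W_s}\;=\;-\langle Z_s,W_s\rangle_{\infty}.
\]
Combining with the display above gives the proposition.

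The main obstacle is the second step: identifying the biextension period $\mathrm{f}_s(B(Z_s,W_s))$ with the Arakelov-theoretic integral $\int_{Z_s}\eta_{W_s}$. This requires choosing explicit representatives of the real and the weight-graded splittings of $B(Z_s,W_s)$ and comparing them using a Green's equation $dd^c\eta_{W_s}+\delta_{W_s}=[\omega_{W_s}]$ for a harmonic form $\omega_{W_s}$ representing the class of $W_s$; the symmetry of $\langle\cdot,\cdot\rangle_{\infty}$ is reflected in the symmetry $\CB(\BV_{\BZ,s})\cong\CB(\BV_{\BZ,s}^\vee)$ induced by interchanging $Z$ and $W$. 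Once this explicit comparison is carried out (as in \cite[\S3.3]{Ha90}), the proposition follows for every $s\in S(\BC)$ where $|Z_s|\cap|W_s|=\emptyset$, hence on a Zariski open dense subset of $S$, and by continuity of both sides on all of $S(\BC)$ where the pairing is defined.
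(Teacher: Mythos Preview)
Your sketch is correct and follows Hain's argument as cited. Note that the paper does not supply its own proof of this proposition: it simply attributes the result to Hain \cite[Prop.~3.3.12]{Ha90} in the sentence preceding the statement and records it as a summary. Your outline of the biextension construction and the identification of the real splitting obstruction with the Green-current integral is exactly the content of that reference, so there is nothing to compare beyond saying that you have filled in what the paper leaves as a citation.
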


For our purpose we are interested in the case $p=q=\frac{d+1}{2}$ and $Z$ and $W$ are rationally equivalent. Thus $\nu_Z = \nu_W$, which we denote by $\nu$ for ease of notation. Then $(\nu,\nu)^*\overline{\sP} = \nu^*\Delta^*\overline{\sP}$ for the diagonal $\Delta\colon  \sJ^p(X/S) \rightarrow \sJ^p(X/S)\times_S \sJ^p(X/S)$.

\begin{defn}\label{DefnMetrizedPoincare}
The metrized line bundle $\overline{\sP^{\Delta}} := \Delta^*\overline{\sP}$ is called the {\em metrized tautological bundle} on  $\sJ^p(X/S)$.
\end{defn}

\subsection{Betti form and Betti rank}\label{SubsectionDiagPoincareNormalFunction}
The Betti form is the curvature form of $\nu^*\overline{\sP^{\Delta}}$, which is a semi-positive $(1,1$)-form. Let us review some details due to Hain.

Denote by $\BV := \BV_{\BZ} \otimes_{\BZ_S}\BQ_S$. 
The Hodge structure $\BV_{\BZ,s} = H^{2p-1}(X_s,\BZ(p))$ is polarizable by the hard Lefschetz theorem and the Lefschetz decomposition, \textit{i.e.} there exists an anti-symmetric bilinear map $Q \colon \BV_s \times \BV_s \rightarrow \BQ$ satisfying:
\begin{enumerate}
\item[(i)]$Q(\BV_s^{r,r'}, \overline{\BV_s^{t,t'}}) = 0$ unless $r=t$ and $r'=t'$; 
\item[(ii)] $\sqrt{-1}^{r'-r} Q(u,\bar{u}) > 0$ for all non-zero $u \in \BV_s^{r,r'}$. 
\end{enumerate}
Moreover, we have a polarization of the variation of Hodge structures $\BV_{\BZ}$, \textit{i.e.} a map of local systems
\[
\sQ \colon \BV \times_S \BV \rightarrow \BQ_S
\]
such that each fiber $\sQ_s$ is the $Q$ above. By abuse of notation, we use $Q$ to denote $\sQ_s$ for any $s \in S(\BC)$.

The curvature form $c_1(\overline{\sP^{\Delta}})$ is characterized in \cite[Prop.~7.1 and 7.3]{HR04}. Here is an explicit formula, using the Betti foliation $\CF_{\mathrm{Betti}}$ on $\sJ^p(X/S)$. 
Write $\pi \colon \sJ^p(X/S) \rightarrow S$ for the natural projection. The Betti foliation gives a decomposition of the holomorphic tangent space $T_x \sJ^p(X/S)= T_x \CF_{\mathrm{Betti}} \oplus T_x \sJ^p(X_{\pi(x)})$ at each $x \in \sJ^p(X/S)$, and set 
\[
q_x \colon T_x \sJ^p(X/S)  ~ \lra ~ T_x \sJ^p(X_{\pi(x)}) = T_x \sJ^p(X/S)_s
\]
to be the projection to the second factor. 
Under the canonical identification of $T_x \sJ^p(X/S)_s$ with $\BV_{\BR,s}$, the $2$-form $c_1(\overline{\sP^{\Delta}})$ is defined by
\begin{equation}\label{EqBettiFormFormula}
c_1(\overline{\sP^{\Delta}})(v_1,v_2) = 2 Q(q_x(v_1), q_x(v_2)), \qquad \text{ for any }v_1, v_2 \in  T_x \sJ^p(X/S).
\end{equation}

\vskip 0.3em
Next we turn to $\nu^*\overline{\sP^{\Delta}}$.  Hain proved \cite[Thm.~13.1]{Ha13} that $\beta_{\nu} := \nu^*c_1(\overline{\sP^{\Delta}})$ is a semi-positive $(1,1)$-form.

\begin{defn}\label{DefnBettiForm}
This semi-positive $(1,1)$-form $\beta_{\nu}$ is called the {\em Betti form associated with $\nu$}.
\end{defn}

In the proof of \cite[Thm.~13.1]{Ha13}, the following explicit formula for $\beta_{\nu}$ is derived from \eqref{EqBettiFormFormula}. 
For each $s \in S(\BC)$, we have a $\BC$-linear map
\[
\nu_{\mathrm{Betti},s} \colon T_s S \xrightarrow{\mathrm{d}\nu} T_{\nu(s)} \sJ(\BV_{\BZ})  \xrightarrow{q_{\nu(s)}} T_{\nu(s)}\sJ(\BV_{\BZ,s}) = T_{\nu(s)} \sJ^p(X_s).
\]
We have  $T_{\nu(s)} \sJ^p(X_s) = \BV_{\BC,s} / \sF^0_s$. For any $u \in T_s S$, \textit{Griffiths' transversality} yields $\nu_{\mathrm{Betti},s}(u) \in \sF^{-1}_s / \sF^0_s$, which is canonically isomorphic to $\BV_s^{-1,0}$. Then 
\[
\beta_{\nu}(u,\bar{u}) = 2\sqrt{-1}Q( \nu_{\mathrm{Betti},s}(u), \overline{\nu_{\mathrm{Betti},s}(u)})\qquad \text{ for any } u \in T_sS,
\]
 and hence 
\begin{equation}\label{EqBettiFormSemiPositive}
\beta_{\nu}(u,\bar{u}) \ge 0 \text{ for all } u \in  T_s S, \text{ with equality if and only if } \nu_{\mathrm{Betti},s}(u) = 0.
\end{equation}
Notice that this also explains the semi-positivity of $\beta_{\nu}$. 
Now we use \eqref{EqBettiFormSemiPositive} to prove the following proposition. 
Define the {\it Betti rank}
\begin{equation}\label{EqDefnBettiRank}
r(\nu) := \max_{s \in S(\BC)} \dim\nu_{\mathrm{Betti},s}(T_s S)
\end{equation}
and the {\it Betti stratum}
\begin{equation}\label{EqDefnBettiStratum}
S^{\mathrm{Betti}}(1) := \{s \in S(\BC): \dim_{\nu(s)}(\nu(S)\cap \CF_{\mathrm{Betti}}) \ge 1\}.
\end{equation}

\begin{prop}\label{CorBettiFormBettiRank}
The following are equivalent:
\[
\text{\rm(i) } \beta_{\nu}^{\wedge \dim S} \equiv 0\text{;} \qquad \text{\rm (ii) } r(\nu) < \dim S\text{;} \qquad \text{\rm (iii) }S^{\mathrm{Betti}}(1) = S.
\]
\end{prop}
\begin{proof}
The equivalence of (ii) and (iii) is easy to show. Let us prove the equivalence of (i) and (ii).

Assume (i) holds. Then for any $s \in S(\BC)$, there exists $0\not= u \in  T_s S$ with $\nu_{\mathrm{Betti},s}(u) = 0$ by \eqref{EqBettiFormSemiPositive}. So $\ker\nu_{\mathrm{Betti},s}\not=0$, and hence $\dim \nu_{\mathrm{Betti},s}(T_s S) < \dim S$. So (ii) holds.

Assume (ii) holds. Then for any $s \in S(\BC)$, 
 there exists $0\not= u \in \ker\nu_{\mathrm{Betti},s}$.  Hence $\beta_{\nu}(u,\bar{u}) = 0$ by \eqref{EqBettiFormSemiPositive}. So $u$ is an eigenvector of the Hermitian matrix defining $\beta_{\nu}$ with eigenvalue $0$. So the determinant of this matrix is $0$, so $\beta_{\nu}^{\wedge \dim S} = 0$ at $s$. 
So (i) holds.
\end{proof}

\section{Gross--Schoen and Ceresa cycles}\label{SectionGSCe}
In this The goal of this section is to study the normal functions for the Gross--Schoen and Ceresa cycles. The goal is to show that the images of these normal functions lie is the relative intermediate Jacobian of an irreducible variation of Hodge structures.

Let $g\ge 3$ and  $\sM_g$ be the  moduli space of the pairs $(C, \xi)$ of a smooth projective curve $C$ of genus $g$ and a class $\xi\in \Pic ^1(C)$ 
so that $(2g-2)\xi=\omega_C$, the canconial class of $C$. Let $f \colon \sC_g \rightarrow \sM_g$ be the universal curve. Denote by $\sC_g^{[3]} := \sC_g\times_{\sM_g}  \sC_g\times_{\sM_g} \sC_g$ and by $\sJ_g = \mathrm{Jac}(\sC_g/\sM_g)$ the relative Jacobian. Then we have two normal functions as particular cases of the construction \eqref{EqNormalFunctionFromFamily} for Gross--Schoen and Ceresa cycles:
\begin{align}\label{EqNFGSCe}
\nu_{\GS}  \colon \sM_g \longrightarrow \sJ^2(\sC_g^{[3]}/\sM_g), & \qquad s \mapsto \AJ(\GS(s)), \\
 \nu_{\Ce} \colon \sM_g \longrightarrow \sJ^{g-1}(\sJ_g/\sM_g), & \qquad s \mapsto \AJ(\Ce(s)).  \nonumber
\end{align}

\subsection{More general intermediate Jacobians}\label{SubsectionBettiFoliation}
In $\mathsection$\ref{SubsectionANF} around \eqref{EqIntermediateJacobianPth}, we constructed the $p$-th intermediate Jacobian $\sJ^p(X/S)$ of the variation of Hodge structures $R^{2p-1}f_*\BZ_{X}(p)$ of weight $-1$ for any family of projective varieties $f\colon X \rightarrow S$, and defined the Betti foliation. This construction can be generalized to an arbitrary  variation of Hodge structures $\BV_{\BZ}$ of weight $-1$  on $S$ (so that each fiber $\BV_{\BZ,s}$ is a Hodge structure of weight $-1$), with Hodge filtration $\sV \supsetneq \cdots \supsetneq \sF^0 \supsetneq \sF^1 \supsetneq \cdots \supsetneq 0$ which is a decreasing chain of holomorphic sub-vector bundles of $\sV:=\BV_{\BZ} \otimes_{\BZ_S} \sO_S$. The {\it intermediate Jacobian} is defined to be
\begin{equation}
\sJ(\BV_{\BZ}) := \BV_{\BZ} \backslash \sV/\sF^0
\end{equation}
which is again a family of compact complex tori on $S$. In the geometric situation \eqref{EqIntermediateJacobianPth}, we have $\BV_{\BZ} = R^{2p-1}f_*\BZ_{X}(p)$. Again as in $\mathsection$\ref{SubsectionBettiFoliation1}, we have the {\it Betti foliation} $\CF_{\mathrm{Betti}}$ on $\sJ(\BV_{\BZ})$ induced by $\sJ(\BV_{\BZ}) \cong \BV_{\BR}/\BV_{\BZ}$, which arises from the  fiberwise isomorphism $\BV_{\BR,s} \iso  \BV_{\BC,s}/\sF^0_s = \sV_s/\sF^0_s$.

\subsection{Passing to sub-VHS} 
We start the discussion on each individual curve $C$. 
Let $s=(C, \xi)\in \sM_g (\BC)$ as above.
First we have 
$$\AJ(\GS(s)) \in \sJ^2(C^3) = \sJ(H^3(C^3,\BZ)(2)), $$
$$\AJ(\Ce(s)) \in \sJ^{g-1}(\mathrm{Jac}(C)) = \sJ(H^{2g-3}(\mathrm{Jac}(C),\BZ)(g-1)).$$ 
Using the Poincar\'{e} duality on $\mathrm{Jac}(C)$ and on $C$, we get 
\[
H^{2g-3}(\mathrm{Jac}(C),\BZ)(g-1) = H_3(\mathrm{Jac}(C),\BZ)(-1) = \bigwedge\nolimits^3 H_1(C,\BZ)(-1) = \bigwedge\nolimits^3 H^1(C,\BZ)(2).
\]
 So $\AJ(\Ce(s)) \in \sJ(\bigwedge^3 H^1(C,\BZ)(2)).$

We have $\bigwedge^3 H^1(C,\BZ)(2) \subseteq H^3(C^3,\BZ)(2)$ in the following way. First, the K\"{u}nneth formula gives a decomposition $ H^3(C^3,\BZ) = H^1(C,\BZ)^{\otimes 3} \bigoplus H^1(C,\BZ)(-1)^{\oplus 6}$. Next, $H^1(C,\BZ)^{\otimes 3}$, as a subspace of $H^3(C^3,\BZ)$, has a basis consisting of $\alpha_1 \wedge \alpha_2 \wedge \alpha_3$, with $\alpha_j$ the pullback of an element in $H^1(C,\BZ)$ under the $j$-th projection $C^3 \rightarrow C$.  The symmetric group $S_3$ acts naturally on $C^3$, and this induces an action of  $S_3$  on $H^1(C,\BZ)^{\otimes 3}$ with $$\sigma(\alpha_1 \wedge \alpha_2 \wedge \alpha_3) = \mathrm{sgn}(\sigma) \alpha_{\sigma(1)} \wedge \alpha_{\sigma(2)} \wedge \alpha_{\sigma(3)}$$ for each $\sigma \in S_3$. 
Then $(H^1(C,\BZ)^{\otimes 3})^{S_3}  = \bigwedge^3 H^1(C,\BZ)$ for this action, with each member having a basis consisting of $\sum_{\sigma\in S_3} \mathrm{sgn}(\sigma) \alpha_{\sigma(1)} \wedge \alpha_{\sigma(2)} \wedge \alpha_{\sigma(3)}$. 

It is easy to check that the pushforward of $\GS(s)$ to any two factors of $C^3$ is trivial. Hence $\AJ(\GS(s)))  \in \sJ(H^1(C,\BZ)^{\otimes 3}(2))$. Moreover, the modified diagonal is easily seen to be invariant under the action of $S_3$ on $C^3$. Thus $\AJ(\GS(s)) \in  \sJ(\bigwedge^3 H^1(C,\BZ)(2))$.

\begin{lemma}[{\cite[Prop.~2.9]{CG93}}]\label{LemmaGSCe}
$\AJ(\GS(s)) = 3\AJ(\Ce(s))$.
\end{lemma}

Better, we have a morphism of Hodge structures 
$$q_H \colon H^1(C, \BZ) \otimes H^1(C,\BZ) \xrightarrow{\cup} H^2(C,\BZ) \cong \BZ(-1),$$ with $\cup$ the cup product (the dual of the intersection pairing) on $H^1(C,\BZ)$. This defines the {\it contractor}
\[
c\colon \bigwedge\nolimits^3 H^1(C,\BZ)(2) \lra H^1(C,\BZ)(1), \qquad x\wedge y \wedge z \mapsto q_H(y,z)x + q_H(z,x)y + q_H(x,y)z.
\]
Both the source and the target of $c$ are Hodge structures of weight $-1$. So $\ker(c)$ is a Hodge structure of weight $-1$, equipped with a polarization $Q$ induced from $q_H$.
\begin{lemma}\label{LemmaGSNFSubHS}
Both $\AJ(\Delta_{\GS}(s)) $ and $\AJ(\Ce(s))$ are in $\ker(c)$.
\end{lemma}
\begin{proof}
\cite[Lem.~5.1.5]{Zh10} proves $\AJ(\Delta_{\GS}(s)) \in \ker(c)$. Then $\AJ(\Ce(s)) \in \ker(c)$ by Lemma~\ref{LemmaGSCe}.
\end{proof}

Now back to  the universal family $f \colon \sC_g \rightarrow \sM_g$. The fiberwise contractor defines a morphism of variations of Hodge structures
 \begin{equation}\label{EqContractor}
 \mathbf{c}\colon \bigwedge\nolimits^3 R^1f_* \BZ_{\sC_g}(2) \longrightarrow R^1f_* \BZ_{\sC_g}(1).
 \end{equation}
 Let 
\begin{equation}\label{EqDefnVHSGS}
\BV_{\BZ} := \ker(\mathbf{c})
\end{equation}
be its kernel. Then $\BV_{\BZ}$ itself is a variation of Hodge structures, which is of weight $-1$ and carries a polarization $Q$.  Let $V$ be a fiber of the underlying local system $\BV_{\BZ}\otimes_{\BZ_S}\BQ_S$; it is an $\Sp_{2g}$-module and has dimension $\binom{2g}{3} - g$. 

\begin{prop}\label{PropGSNFSubVHS}
The followings are true:
\begin{enumerate}
\item[(i)] both $\nu_{\GS}$ and $\nu_{\Ce}$ have images in $\sJ(\BV_{\BZ})$;
\item[(ii)] neither $\nu_{\GS}$ nor $\nu_{\Ce}$ is a torsion section;
\item[(iii)] the variation of Hodge structures (VHS) $\BV_{\BZ}$ on $\sM_g$ is irreducible, \textit{i.e.} the only sub-VHSs of $\BV_{\BZ}$ are $0$ and itself.
\end{enumerate}
\end{prop}
\begin{proof}
(i) follows immediately from Lemma~\ref{LemmaGSNFSubHS}.

For (ii), it suffices to find a point $s=(C, \xi)$ of genus $g$ such that $\AJ(\Ce(s))$ is not torsion; the result then follows for $\nu_{\GS}$ by Lemma~\ref{LemmaGSCe}. There are many examples of such curves in existing literature. Alternatively, the result for $\nu_{\Ce}$ can be already deduced from Ceresa's original argument in \cite{Ceresa}.

Part (iii), it suffices to prove that $V:=\BV_{\BZ,s}\otimes \BQ$ is a simple $\Sp_{2g}$-module, or equivalently $V$ is an irreducible representation of $\Sp_{2g}$. This is a standard result of the representation theory for $\mathfrak{sp}_{2g}$, see for example \cite[Thm.~17.5]{FultonHarris}.
\end{proof}

\section{Beilinson--Bloch height of  Gross--Schoen and Ceresa cycles}\label{SectionArithmetic}
The goal of this section is to prove Theorem~\ref{ThmZhangBBIntro}. 
More precisely, we first show that the Beilinson--Bloch heights of Gross--Schoen and Ceresa cycles can be expressed as heights of some metrized adelic line bundles on $\sM_g$. We then  relate this metrized line bundle to the metrized line bundles constructed by the normal functions. 


\subsection{Construction of the adelic line bundle}\label{SubsectionConstrALB}
Let $g\ge 3$ and  $\sM_g$ be a fine moduli stack of the pairs $(C, \xi)$ of a smooth projective curve $C$ of genus $g$ and a class $\xi\in \Pic ^1(C)$ 
so that $(2g-2)\xi=\omega_C$, the canconial class of $C$, and let $\sC_g \rightarrow \sM_g$ be the universal curve.

Denote by $\sJ_g = \mathrm{Jac}(\sC_g/\sM_g)$ the relative Jacobian. Identify $\sJ_g$ with its dual via the principal polarization given by a suitable theta divisor.


The Poincar\'{e} line bundle $\CP$  on $\sJ_g \times_{\sM_g} \sJ_g$ extends to an integrable adelic line bundle $\overline{\CP}$ as follows. Define $\CP^{\Delta} := \Delta^*\CP$ 
for the diagonal $\Delta \colon\sJ_g \rightarrow \sJ_g \times_{\sM_g} \sJ_g$. Then $\CP^{\Delta}$ is relatively ample on $\sJ_g \rightarrow \sM_g$, and $[2]^*\CP^{\Delta} = (\CP^{\Delta})^{\otimes 4}$. So $(\sJ_g, [2], \CP^{\Delta})$ is a polarized dynamical system over $\sM_g$ in the sense of \cite[$\S$2.6.1]{YZ21}. Thus, Tate's limit process gives a nef adelic line bundle $\overline{\CP}^{\Delta}$ on $\sJ_g$, as executed by \cite[Thm.~6.1.1]{YZ21}. Now we obtain the desired $\overline{\CP} \in \widehat{\mathrm{Pic}}(\sJ_g/\BZ)_{\BQ}$ by letting
\[
2\overline{\CP} := m^*\overline{\CP}^{\Delta} - p_1^*\overline{\CP}^{\Delta}- p_2^*\overline{\CP}^{\Delta},
\]
where $m, p_1, p_2 \colon \sJ_g \rightarrow \sJ_g \times_{\sM_g} \sJ_g$ with $m$ being the addition and $p_1$ (resp. $p_2$) being the projection to the first (resp. second) factor.

Let 
\[
i_{\mathbf{\xi}} \colon \sC_g \longrightarrow \sJ_g 
\]
be the Abel--Jacobi map based at $\mathbf{\xi}$. Then we have an $\sM_g$-morphism 
$$(i_{\mathbf{\xi}},i_{\mathbf{\xi}}) \colon \sC_g\times_{\sM_g}\sC_g \rightarrow \sJ_g \times_{\sM_g}\sJ_g,$$ and hence get an integrable adelic line bundle on $ \sC_g\times_{\sM_g}\sC_g$
\[
\overline{\CQ} := (i_{\mathbf{\xi}},i_{\mathbf{\xi}})^*\overline{\CP} \in \widehat{\mathrm{Pic}}( \sC_g\times_{\sM_g}\sC_g /\BZ)_{\BQ},
\]
and we can compute
\begin{equation}\label{EqCQ}
\CQ = \CO(\Delta) - p_1^*\xi - p_2^*\xi.
\end{equation}
with $\Delta$ the diagonal of $\sC_g \rightarrow \sC_g \times_{\sM_g}\sC_g$, and $p_1, p_2$ the projections $\sC_g \times_{\sM_g}\sC_g \rightarrow \sC_g$.

Finally the Deligne pairing gives an adelic line bundle on $\sM_g$ by \cite[Thm.~4.1.3]{YZ21}
\begin{equation}
\overline{\CL} := \langle \overline{\CQ}, \overline{\CQ} ,\overline{\CQ} \rangle \in \widehat{\mathrm{Pic}}(\sM_g /\BZ)_{\BQ}.
\end{equation}
The line bundle $\overline{\CL}$ has been constructed \cite[$\S$3.3.4]{YuanArithBig} in slightly different step. 

From our construction above, it is immediate that the Beilinson--Bloch height of the Gross--Schoen cycles is defined by $\overline{\CL}$ by \cite[Thm.~2.3.5]{Zh10}. 
\begin{thm}\label{ThmZhangBB}
For any $s \in \sM_g(\IQbar)$, we have
\[
\langle \GS(s),  \GS(s) \rangle_{\mathrm{BB}} = h_{\overline{\CL}}(s).
\]
\end{thm}

\subsection{Relating with the Gross--Schoen normal function}
Let $\nu_{\GS}$ be the Gross--Schoen normal function $\sM_g \rightarrow \sJ^2(\sC_g^{[3]}/\sM_g)$ defined in \eqref{EqNFGSCe}. 

Set
\[
\overline{\sP^{\mathrm{GS}}} := (\nu_{\GS}, \nu_{\GS})^*\overline{\sP} = \nu_{\GS}^*\overline{\sP^{\Delta}}
\] 
for the metrized Poincar\'{e} bundle $\overline{\sP}$ on $\sJ^2(\sC_g^{[3]}/\sM_g)\times_{\sM_g}\sJ^2(\sC_g^{[3]}/\sM_g)$ defined in Definition~\ref{DefnMetrizedPoincare1} (or the metrized tautological bundle $\overline{\sP^{\Delta}}$ from Definition~\ref{DefnMetrizedPoincare}). Recall the Betti form $
\beta_{\GS}$ constructed from $\nu_{\GS}$ as in Definition~\ref{DefnBettiForm}, 
\textit{i.e.} the curvature of $\overline{\sP^{\mathrm{GS}}}$. It is a semi-positive $(1,1)$-form.
The following result can also be deduced from R. de Jong's work \cite[(9.3)]{dJ16}:

\begin{thm}\label{PropTwoCurvatureEqualAdeGS}
We have the following identity of $(1,1)$-forms on $\sM_g$:
\[
c_1(\overline{\CL}_{\BC}) = \beta_{\GS} .
\]
In particular, $c_1(\overline{\CL}_{\BC})$ is semi-positive.
\end{thm}
\begin{proof}
By the definition of curvatures, we need only construct for each point $s\in \sM_g$, an open neighborhood $U$, and some invertible sections $\ell\in \Gamma (U, \CL)$ and $m\in \Gamma (U, \sP^\GS)$ so that $\|\ell\|=\|m\|$.

First we represent $\xi_U$ by some divisor on $\sC_U:=\sC_g\times _{\sM_g} U$. Then the cycle $\GS(U)$ is represented by a cycle $Z_U$ of $\sC_U^{[3]}$.
Next,  up to shrinking $U$, we can take rational sections $\ell_1, \ell_2, \ell_3$ of $\CQ$ over $\sC_U^2$ such that the product of their divisors is disjoint from 
the self product of $Z_U$ on $\sC_U^6$: 
\[
p_{123}^{-1}|Z_U|\cap p_{456}^{-1}|Z_U|\cap p^{-1}_{14}|t_1|\cap p_{25}^{-1}|t_2|\cap p_{36}^{-1}|t_3|=\emptyset.
\]
Let $W_U$ denote the cycle
\[
W_U=(t_1\otimes t_2\otimes t_3)^*Z_U=p_{123 *}( p_{456}^{-1}Z_U\cdot  p^*_{14}t_1\cap p_{25}^*t_2\cap p_{36}^*t_3).
\]
Then $W_U$ is linearly equivalent to $Z_U$ but disjoint from $Z_U$. Thus, by Hain \cite{Ha90}, we have a well-defined section $m\in \Gamma (U, \sP^\GS)$
such that 
\[
\pair{Z_U, W_U}_\infty (x)=-\log \|m\| (x), \qquad \forall x\in U.
\]
We are done.
\end{proof}

\section{The volume identity}\label{SectionVolIdentity}
In this section, we prove the volume identity relating the (algebraic) volume and analytic volume for the metrized adelic line bundle $\bar \CL$. 
The definition of the (algebraic) volume is
\[
\widehat{\mathrm{vol}}(\wt{\CL}_{\BC}|_S) := \lim_{m\rightarrow \infty}\frac{(\dim S)!}{m^{\dim S}} \hat{h}^0(S,\wt{\CL}_{\BC}|_S^{\otimes m})
\]
where $\hat{h}^0(S,\wt{\CL}_{\BC}|_S^{\otimes m})$ is the dimension of the space of effective sections of $\wt{\CL}_{\BC}|_S^{\otimes m}$. For a precise definition, we refer to \cite[Defn.~5.1.3 and Thm.~5.2.1]{YZ21}. Analytic volume is the integration of volume forms deduced from the curvature forms.

\begin{thm}\label{PropHSforGS}
Let $S \subseteq \sM_{g,\BC}$ be an irreducible subvariety, 
and let $\wt{\CL}_{\BC}|_S$ denote the base change of $\wt{\CL}_{\BC}$ to $S$. Then we have
\begin{equation}\label{EqVolume}
\widehat{\mathrm{vol}}(\wt{\CL}_{\BC}|_S) 
= \int_{S(\BC)}c_1(\overline{\CL}_{\BC})^{\wedge \dim S} = \int_{S(\BC)}\beta_{\GS}^{\wedge \dim S}.
\end{equation}
\end{thm}
\begin{remark}
If $\wt \CL$ is nef, then the above formula follows from the arithmetic Hilbert--Samuel formula. 
Unfortunately, we do not know how to prove the nefness directly. 
\end{remark}

\subsection{Some terminology for adelic line bundles}
For readers' convenience, we briefly recall some terminologies in the theory of adelic line bundles which will be used in our proof of Theorem~\ref{PropHSforGS}.

Let $X$ be a variety defined over a number field $K$. 
An {\em adelic line bundle $\overline{\CL}$ on $X$} is, roughly speaking, the limit of a sequence of {\em model (Hermitian) line bundles} $(\CX_i,\overline\CL_i)$, where $\CX_i$ is a projective arithmetic variety such that $X$ is Zariski open in $\CX_{i,\BQ}$ and $\overline\CL_i$ is a Hermitian line bundle on $\CX_i$. The precise meaning is given by \cite[Chap.~2]{YZ21}. We say that {\it the sequence $(\CX_i,\overline\CL_i)$ converges to $(X,\overline{L})$}.

The group of adelic line bundles on $X$ is usually denoted by $\widehat{\Pic}(X)$. There exists a natural morphism
\[
\widehat{\Pic}(X) \rightarrow \Pic(X),
\]
and denote by $\CL$ the image of $\overline{\CL}$. Moreover for each $i$, the restriction of $\CL_{i,\BQ}$ on $X$ is a line bundle on $X$, and we have a {\it connection morphism} $\ell_i \colon \CL \iso \CL_{i,\BQ}|_X$. Details are also in \cite[Chap.~2]{YZ21}.

\subsection{Proof of Theorem~\ref{PropHSforGS}}
The second equality is immediate by Proposition~\ref{PropTwoCurvatureEqualAdeGS}. We will prove the first equality. 
We start to work on $\sM_g$. To ease notation, denote by $\sM = \sM_g$ and $\sJ = \sJ_g$.

By \cite[Theorem~5.2.1]{YZ21}, it suffices to construct a sequence of model (Hermitian) line bundles $(\CM_i, \overline\CL_i)$  which converges to $(\sM,\overline\CL)$ such that 
\begin{equation}\label{EqGoal}
\lim _{i\rightarrow \infty} \mathrm{vol} (\CL_{i,\BC}|_{S_i})=\int _{S(\BC)} c_1(\overline \CL_\BC)^{\wedge \dim S},
\end{equation}
where $S_i$ is the Zariski closure of $S\subseteq \sM \subseteq \CM_{i,\BC}$.

We start the construction. First, the proof of \cite[Thm.~6.1.1]{YZ21} yields
\begin{itemize}
\item a sequence of projective integral models $\pi_i \colon \CJ_i \rightarrow \CM$ of $\sJ \rightarrow \sM$, 
\item an ample Hermitian line bundle $\overline\CN$ on $\CM$,
\item a sequence of model line bundles $(\CJ_i, \overline \CP^\Delta_i)$ converging to $(\sJ, \overline \CP^\Delta)$, 
\end{itemize}
 such that $\overline \CP^\Delta_i+4^{-i} \pi_i^*\overline \CN$ is nef. Denote by $\ell_i$ the connection morphism $ \CP^\Delta \xrightarrow{\sim} \CP^{\Delta}_{i,\sJ} := \CP^{\Delta}_i \times_{\CJ_i} \sJ$.

For each $n\in \BN$, we have an action of $\sJ[n]$ on $\sJ$ by translating torsion points:
\[
m_n \colon \sJ[n]\times_S \sJ \lra \sJ, \qquad (t, x)\mapsto x+t.
\]
By  Theorem of the square, the norm line bundle $\RN _p m_n^*\CP^\Delta$ is $(\CP^\Delta)^{\otimes n^{2g}}$, where $p$ is the natural projection $\sJ[n]\times_S \sJ \rightarrow \sJ$. 
Thus, we  obtain an adelic line bundle 
$$\overline \CP^\Delta_{i, n}:=n^{-2g} \RN _p m_n ^* \overline \CP^\Delta_i$$
which is  realized on some model $\CJ_{i, n}$ of $\sJ$ with connection morphism $\ell_{i, n}: \CP^\Delta \xrightarrow{\sim} \CP^\Delta_{i, n, \sJ}$.
Moreover $\div (\ell_{i, n})$ is in fact bounded by $\div (\ell _i)$. Over $\BC$, the curvature form  $c_1(\overline \CP^\Delta_{i, n, \BC})$ is obtained from the curvature form $c_1(\overline \CP^\Delta_{i, \BC})$ by taking over average over $n$-torsion points. It follows that these forms converge to $c_1(\overline \CP^\Delta)$ uniformly 
in any compact subset of $\sJ(\BC)$. These bundles also induce a double sequence of model line bundles $(\CM_i, \overline \CL_{i, n})$ of $(\sM, \overline\CL)$ so that they convergent to $(\sM, \overline \CL)$ as $i \rightarrow \infty$, and that the metric $c_1(\overline\CL_{i, n, \BC})$  convergent uniformly to $c_1(\overline \CL_\BC)$ as $n\rightarrow\infty$.

More precisely, we let $\Omega_i$ be an increasing sequence of relatively compact open subsets of $\sM(\BC)$ with $\sM(\BC)=\cup \Omega_i$ and $\epsilon _i$ be a decreasing sequence of positive numbers convergent to $0$ such that  
\begin{equation}\label{eqLN}
c_1(\overline \CL_\BC)\le \epsilon _i^{-1}c_1(\overline \CN_\BC) \quad \text{ on }\Omega_i.
\end{equation}
Then for each $i$, choose $n_i$ such that  
\begin{equation}\label{eq-MN}
-\epsilon_i ^dc_1(\overline \CN_\BC)\le c_1(\overline \CL_{i, n_i, \BC})-c_1(\overline \CL_\BC)\le \epsilon_i ^d c_1(\overline \CN_\BC)
\end{equation}
as Hermitian forms on the tangent bundle of $\Omega_i$. 

For each $i \ge 1$, set $\overline \CL_i := \overline \CL_{i, n_i}$. We will show that the sequence $(\CM_i,\overline\CL_i)$ is the desired sequence of model line bundles such that \eqref{EqGoal} holds.

Using the reference curvature $c_1(\overline\CN_\BC)$, we may talk about eigenvalues of $c_1(\overline\CL_{\BC})$ and $c_1(\overline\CL_{i, \BC})$. Recall that $S_i$ is the closure of $S\subseteq \sM \subseteq \CM_{i,\BC}$ for each $i$. 

From now on, to ease notation we use $\CL_{i,\BC}$ to denote $\CL_{i,\BC}|_{S_i}$.

Let us apply Demailly's  Morse inequality \cite{De91} to the bundle $\overline\CL_{i,\BC}$ on $S_i(\BC)$.
For each $q\in \BN$, let $S_{i,q}$ denote the subset of $S_i(\BC)$ of points where $c_1(\overline \CL_{i,\BC})$ has $q$ negative eigenvalues and $n-q$ positive eigenvalues. Then by \cite[(1.3), (1.5)]{De91}, we have the following estimates as $k\rightarrow \infty$, with $d := \dim S$
$$h^q(k\CL_{i,\BC})\le \frac {k^d}{d!}\left|\int _{S_{i,q}}c_1(\overline \CL_{i,\BC})^{\wedge d}\right|+o(k^d),$$
$$\sum _q(-1)^qh^q(k\CL _{i,\BC})=\frac {k^d}{d!}\int_{S(\BC)} c_1(\overline \CL_{i,\BC})^{\wedge d}+o(k^d).$$
It follows, combined with the definition of $\mathrm{vol}(\CL_{i,\BC})$, that 
\[
\left| \mathrm{vol}(\CL_{i,\BC}) -\int_{S(\BC)} c_1(\overline\CL_{i,\BC})^{\wedge d}\right |\le \sum _{q>0}\left|\int _{S_{i,q}}c_1(\overline \CL_{i,\BC})^{\wedge d}\right|.
\]
By \cite[Thm.~5.4.4]{YZ21}, 
$$\lim _{i\rightarrow \infty} \int_{S(\BC)} c_1(\overline\CL_{i,\BC})^{\wedge d}= \int_{S(\BC)} c_1(\overline\CL_{\BC})^{\wedge d}.$$
Thus to prove \eqref{EqGoal}, it remains to prove 
\begin{equation}\label{EqGoal2}
\lim _{i\rightarrow \infty} \int _{S_{i,q}}c_1(\overline \CL_{i,\BC})^{\wedge d}=0 \quad \text{ for each }q>0.
\end{equation}

We will prove \eqref{EqGoal2} on $\Omega _i\cap S_{i, q}$ and on its complement $S_{i, q}\setminus \Omega_i$ respectively.

On $\Omega _i\cap S_{i, q}$, by  \eqref{eqLN} and \eqref{eq-MN} we have
$$-\epsilon _i^d c_1(\overline\CN_\BC)\le c_1(\overline\CL_{i,\BC})\le (\epsilon_i ^{-1}+\epsilon_i ^d)c_1(\overline\CN_\BC).$$
Thus on $\Omega_i\cap S_{i,q}$, $c_1(\overline\CL_{i,\BC})$ has all eigenvalue $\le \epsilon_i ^{-1}+\epsilon_i ^d$ 
and one negative eigenvalue with absolute value bounded by $\le \epsilon_i^d $.
It follows that $|c_1(\CL_{i,\BC})^{\wedge d}|$ is bounded by 
$$\epsilon_i ^d(\epsilon_i ^{-1}+\epsilon_i ^d)^{d-1}c_1(\overline \CN_{\BC})^{\wedge d}=\epsilon _i(1+\epsilon_i ^{d+1})^{d-1}c_1(\overline \CN_{\BC})^d.$$
It follows  that 
\begin{equation}\label{Goal2Part1}
\int _{S_{i,q}\cap \Omega _i}|c_1(\overline \CL_i)^{\wedge d}|=O(\epsilon _i).
\end{equation}

On $S_{i,q}\setminus \Omega _i$, using 
$$\CP=\frac 12(m^*\CP^\Delta- p_1^*\CP^\Delta-p_2^*\CP^\Delta),$$
we may write $\overline\CL_i=\overline \CE_i-\overline \CF_i$, where $\overline \CE_i$ and $\overline \CF_i$
are   two sequences of new bundles 
convergent to $\overline\CE$ and $\overline\CF$ with smooth metrics respectively.
Then we have 
 \begin{equation}\label{Goal2Part21}
\int _{S_{i,q}\setminus \Omega _i}|c_1(\overline \CL_{i,\BC})^{\wedge d}|\le \int _{S(\BC)\setminus \Omega _i}c_1(\overline \CE_{i,\BC}+\overline \CF_{i,\BC})^{\wedge d}.
 \end{equation}

Now let $\Omega_i'\subseteq \Omega _i$ be another increasing sequence of relatively compact open subsets so that $\cup \Omega _i'=S(\BC)$.
Then we can construct an increasing sequence of continuous functions $f_i$ so that $f_i \equiv 1$ on $S(\BC)\setminus \Omega _i$ 
and $f_i\equiv 0$ on $\Omega _i'$. Then for any $i\ge j$ we have
\[
 \int _{S(\BC)\setminus \Omega _i}c_1(\overline \CE_{i,\BC}+\overline \CF_{i,\BC})^{\wedge d}\le \int _{S(\BC)}f_i\cdot c_1(\overline \CE_{i,\BC}+\overline \CF_{i,\BC})^{\wedge d}
 \le \int _{S(\BC)}f_j\cdot c_1 (\overline \CE_{i,\BC}+\overline \CF_{i,\BC})^{\wedge d}.
\]
 Fix $j$ and take $i\lra \infty$, we get 
 $$\limsup _{i\rightarrow \infty}
 \int _{S(\BC)\setminus \Omega _i}c_1(\overline \CE_{i,\BC}+\overline \CF_{i,\BC})^{\wedge d}\le   \int _{S(\BC)}f_j\cdot c_1 (\overline \CE+\overline \CF)^{\wedge d}\le  
 \int _{S(\BC)\setminus \Omega _j'}c_1 (\overline \CE+\overline \CF)^{\wedge d}.$$
 Letting $j\rightarrow \infty$, we get 
 \begin{equation}\label{Goal2Part2}
 \lim _{i\rightarrow\infty}
 \int _{S(\BC)\setminus \Omega _i} c_1 (\overline \CE_{i,\BC}+\overline \CF_{i,\BC})^{\wedge d}=0.
 \end{equation}
 Now \eqref{EqGoal2} follows immediately from \eqref{Goal2Part1}, \eqref{Goal2Part21} and \eqref{Goal2Part2}. We are done.
\qed

\section{Period map associated with the Gross--Schoen normal function}\label{SectionPeriodMap}

Recall the polarized variation of Hodge structure $\BV_{\BZ}$ on $\sM_g$ of weight $-1$ defined in \eqref{EqDefnVHSGS} which carries a polarization $Q$; in particular  the fiber $\BV_{\BZ,u(\wt{s})}$ is a $\BZ$-Hodge structure of weight $-1$ polarized by $Q$ for each $s \in S(\BC)$. 
Recall the Gross--Schoen normal function $\nu_{\GS} \colon \sM_g \rightarrow \sJ(\BV_{\BZ})$ sending each $s \in \sM_g(\BC)$ to $\AJ(\GS(s))$ (see \eqref{EqNFGSCe} and Proposition~\ref{PropGSNFSubVHS}.(i)).

Let $S$ be an irreducible subvariety of $\sM_{g,\BC}$. By abuse of notation, we will still use $\BV_{\BZ}$, $\sJ(\BV_{\BZ})$ and $\nu_{\GS}$ to denote their restrictions to $S$.

The goal of this section is to define the period map associated with $\nu_{\GS}$, and to relate the Betti foliation $\CF_{\mathrm{Betti}}$ on $\sJ(\BV_{\BZ})$ to the period map.

\subsection{Period map for $\BV_{\BZ} \rightarrow S$}
For the universal cover $u_S \colon \wt{S} \rightarrow S^{\mathrm{an}}$, there is a canonical trivialization $u_S^*\BV_{\BZ} \cong \wt{S} \times V_{\BZ}$ with $V_{\BZ} = H^0(\wt{S},u_S^*\BV_{\BZ})$. 
Thus we have the following {\it universal pure period map} to a certain parametrizing space $\CM_0$ of Hodge structures on 
\[
V := V_{\BZ} \otimes \BQ
\]
 of weight $-1$ polarized by $Q$ (with fixed Hodge numbers)
\[
\wt{\varphi}_0 \colon \wt{S} \rightarrow \CM_0, 
\]
sending  $\wt{s} \in \wt{S}$ to the Hodge structure on $V$ given by the canonical $V = \BV_{\BZ,u(\wt{s})} \otimes\BQ$.

The polarization $Q$ can be seen as a non-degenerate alternating pairing on $V$. It is known that $\CM_0$ is a $\mathbf{G}_0^{\CM}(\BR)^+$-orbit for the  reductive $\BQ$-group $\mathbf{G}_0^{\CM} := \mathrm{Aut}(V,Q)$ -- hence has a real semi-algebraic structure -- and has a natural complex structure.

Finally, since $S$ is a quasi-projective variety, there exists an arithmetic subgroup $\Gamma_0$ of $\mathbf{G}_0^{\CM}(\BQ)$ such that $\wt{\varphi}_0$ descends to a holomorphic map $\varphi_0 \colon S^{\mathrm{an}} \rightarrow \Gamma_0\backslash\CM_0$. For our case of the Gross--Schoen cycles, $\varphi_0$ is known to be injective.

\subsection{Period map for $\nu_{\GS}$}\label{SubsectionPeriodMap}
By \cite[$\S$4.1]{Ha13} and \cite{EZ}, $\nu_{\GS}$ defines a variation of mixed Hodge structures $\BE$ of weight $-1$ and $0$ on $S$, which fits into a short exact sequence $0 \rightarrow \BV_{\BZ} \rightarrow \BE \rightarrow \BZ_S \rightarrow 0$. This short exact sequence, after $\otimes\BQ$ and pullback by $u_S$, is split as a sequence of local systems. Fix such a splitting. 
There is canonical trivialization $u_S^*\BE \cong \wt{S} \times E_{\nu}$ with $E_{\nu} = H^0(\wt{S}, u_S^*\BE)$, and the fixed splitting induces $E_{\nu,\BQ} \cong V\oplus \BQ$. 

As above, we have the {\it universal period map} 
\begin{equation}\label{EqUniversalPeriodMap}
\wt{\varphi} = \wt{\varphi}_{\nu} \colon \wt{S} \longrightarrow \CM 
\end{equation}
where $\CM$ is a parametrizing space  of mixed Hodge structures on $V\oplus \BQ \cong E_{\nu,\BQ}$ of weight $-1$ and $0$ (with fixed Hodge numbers and whose grade $-1$ part is polarized by $Q$). Indeed for each $\tilde{s} \in \wt{S}$, the fiber $\BE_{u(\wt{s})} \otimes \BQ$ is a $\BQ$-mixed Hodge structure of weight $-1$ and $0$, and $\wt{\varphi}$ send $\tilde{s}$ to the Hodge structure on $E_{\nu,\BQ}$ given by the canonical $E_{\nu,\BQ} = \BE_{u(\wt{s})} \otimes \BQ$.

The following subgroup of $\mathrm{GL}(V\oplus \BQ)$
\[
\mathbf{G}^{\CM}:= \left\{\begin{bmatrix} g & v \\ 0 & 1\end{bmatrix} : g \in \mathrm{Aut}(V,Q),~ v \in V\right\} = V \rtimes \mathrm{Aut}(V,Q) = V \rtimes \mathbf{G}_0^{\CM}.
\]
 preserves the filtration $V \subseteq V \oplus \BQ$. By \cite[last Remark of $\S$3]{Pearlstein2000},  $\mathbf{G}^{\CM}(\BR)^+$ acts transitively on $\CM$ by Lemma~\ref{LemmaMTGroup} and its proof. So $\CM = \mathbf{G}^{\CM}(\BR)^+h$ for any $h \in \CM$.

Again, there exists an arithmetic subgroup $\Gamma$ of $\mathbf{G}^{\CM}(\BQ)$ such that $\wt{\varphi}$ descends to a holomorphic map $\varphi \colon S^{\mathrm{an}} \rightarrow \Gamma\backslash\CM$ since $S$ is a quasi-projective variety. Then $\varphi$ is injective because $\varphi_0$ is.

We close this subsection with the definition of the group $\mathrm{MT}(\nu_{\GS}(S))$. By \cite[$\S$4, Lem.~4]{Andre92}, the Mumford--Tate group $\mathrm{MT}(\BE_s\otimes \BQ)$ of the $\BQ$-mixed Hodge structure $\BE_s\otimes \BQ$ is locally constant for a very general point $s \in S(\BC)$.  This group is denoted by $\mathrm{MT}(\nu_{\GS}(S))$ and is known to be a subgroup of $\mathbf{G}^{\CM}$.

\subsection{Relating the two period maps and the Betti foliation}
The quotient $p \colon \mathbf{G}^{\CM} \rightarrow \mathbf{G}_0^{\CM} = \mathbf{G}^{\CM}/V$ induces a surjective map $\CM  \rightarrow \CM_0$, which by abuse of notation we still denote by $p$. Then we have the following commutative diagram
\begin{equation}\label{EqPeriod}
\xymatrix{
& \CM \ar[d]^-{p} \ar[r]^-{u} & \Gamma \backslash \CM \ar[d]^-{[p]} \\
\wt{S} \ar[r]_-{\wt{\varphi}_0} \ar[ru]^-{\wt{\varphi}} & \CM_0 \ar[r]^-{u_0} & \Gamma_0 \backslash \CM_0
}
\end{equation}
where $u$ and $u_0$ are the uniformizations. 

Moreover since $\mathbf{G}^{\CM} = V \rtimes \mathbf{G}_0^{\CM}$, we have 
\begin{equation}\label{EqSemiAlgebraicStructureCM}
\CM \xrightarrow{\sim} V(\BR) \times \CM_0.
\end{equation}
The semi-algebraic structure on $\CM$ is given by this product. The complex structure on $\CM$ is given as follows. Any $h_0 \in \CM_0$ parametrizes a Hodge structure on $V$ of weight $-1$, whose Hodge filtration we denote by $\sF^{\bullet}_{h_0}V_{\BC}$. The complex structure on the fiber $p^{-1}(h_0)$ is  given by $V_{\BR} \cong V_{\BC}/\sF^{\bullet}_{h_0} V_{\BC}$. This makes $\CM$ into a complex vector bundle on $\CM_0$ such that, for any $a \in V(\BR)$, the subset $\{a\}\times \CM_0 \subseteq V(\BR)\times \CM_0 = \CM$ is complex analytic.

\medskip

Now let us explain how the intermediate Jacobian $\pi \colon \sJ(\BV_{\BZ}) \rightarrow S$ and the Betti foliation $\CF_{\mathrm{Betti}}$, both defined in $\mathsection$\ref{SubsectionBettiFoliation}, are related to this diagram \eqref{EqPeriod}. Because $V_{\BZ} = H^0(\wt{S},u_S^*\BV_{\BZ})$ is a lattice in $V_{\BR}$ and $u_S^*\BV_{\BZ} \cong V_{\BZ} \times \wt{S}$, we have $\sJ(\BV_{\BZ}) \times_S \wt{S} \cong (\BV_{\BR}/\BV_{\BZ}) \times_S \wt{S} = V_{\BR}/V_{\BZ} \times \wt{S}$. For the universal covering map $u_{\sJ} \colon \wt{\sJ} \rightarrow \sJ(\BV_{\BZ})$, this furthermore induces 
\begin{equation}\label{EqUniversalSJ}
\wt{\sJ} \xrightarrow{\sim} V(\BR) \times \wt{S}.
\end{equation}
 So the leaves of the Betti foliation $\CF_{\mathrm{Betti}}$ are precisely those $u_{\sJ}(\{a\}\times \wt{S})$ for all $a \in V(\BR)$.

It can be related to periods maps as follows.  Each $x \in \sJ(\BV_{\BZ})$ lies in $\sJ(\BV_{\BZ})_s = \sJ(\BV_{\BZ,s})$ for $s =\pi(x)$, 
and hence gives rise to a $\BZ$-mixed Hodge structure which is an extension of $\BV_{\BZ,s}$ by $\BZ(0)$ 
 by Carlson \cite{Carlson85}.  So each $\wt{x} \in \wt{\sJ}$ gives rise to a $\BQ$-mixed Hodge structure on $V\oplus \BQ$ of weight $-1$ and $0$. 
So we obtain a period map $\wt{\varphi}_{\sJ} \colon \wt{\sJ} \rightarrow \CM$, which by the constructions is the composite
\begin{equation}\label{EqPeriodJacobian}
\wt{\varphi}_{\sJ} \colon \wt{\sJ} \xrightarrow{\sim} V(\BR) \times \wt{S} \xrightarrow{(1,  \wt{\varphi}_0)} V(\BR) \times \CM_0 \xrightarrow{\sim} \CM,
\end{equation}
where the first isomorphism is \eqref{EqUniversalSJ} and the last isomorphism is   \eqref{EqSemiAlgebraicStructureCM}. For our case of Gross--Schoen cycles, each fiber of $\wt{\varphi}_{\sJ}$ has dimension $0$ because $\varphi_0$ is injective.
 
To ease notation, we identify $\CM = V(\BR) \times \CM_0$ using \eqref{EqSemiAlgebraicStructureCM}. Then the leaves of $\CF_{\mathrm{Betti}}$ are precisely $u_{\sJ}\left( \wt{\varphi}_{\sJ}^{-1}(\{a\}\times \CM_0) \right)$ for $a \in V(\BR)$.

Now we go back to $\wt{\varphi} = \wt{\varphi}_{\nu} \colon \wt{S} \rightarrow \CM$ from \eqref{EqUniversalPeriodMap}. 
The map $\nu_{\GS} \circ u_S \colon \wt{S} \rightarrow S \rightarrow \sJ(\BV_{\BZ})$ lifts to $\wt{\nu} \colon \wt{S} \rightarrow \wt{\sJ}$, and $\wt{\varphi} = \wt{\varphi}_{\sJ}\circ \wt{\nu}$.

The Betti stratum defined in \eqref{EqDefnBettiStratum} is 
$$S^{\mathrm{Betti}}(1) = \{s \in S(\BC): \dim_{\nu_{\mathrm{GS}}(s)} (\nu_{\mathrm{GS}}(S) \cap \CF_{\mathrm{Betti}}) \ge 1 \}$$
 in our case.
\begin{lemma}\label{LemmaBettiFoliationBettiRank}
Under the identification $\CM = V(\BR) \times \CM_0$ given by \eqref{EqSemiAlgebraicStructureCM}, we have
\[
u_S^{-1}\left(S^{\mathrm{Betti}}(1)\right)  =  \bigcup_{\substack{\{a\}\times \wt{C} \subseteq \wt{\varphi}(\wt{S}), \qquad a \in V(\BR) \\ \wt{C}\text{ is a complex analytic curve in }\CM_0}} (\{a\}\times \wt{C}).
\]
\end{lemma} 
\begin{proof}
Consider $\nu_{\GS}(S^{\mathrm{Betti}}(1)) \subseteq \sJ(\BV_{\BZ})$. Recall that each fiber of $\wt{\varphi}_{\sJ}$ has dimension $0$. 
By the characteriation of the leaves of $\CF_{\mathrm{Betti}}$ below \eqref{EqPeriodJacobian}, we have  
$$u_{\sJ}^{-1}\left( \nu_{\GS}(S^{\mathrm{Betti}}(1))\right) = \{ \wt{s}' \in \wt{\nu}(\wt{S}) : \dim_{\wt{s}'} \wt{\varphi}_{\sJ}^{-1}(\{\wt{s}'_V\} \times \CM_0\}) \ge  1\},$$
 where $\wt{s}'_V$ is the image of 
 $$\wt{s} \in \wt{\sJ} \xrightarrow{\wt{\varphi}_{\sJ}} V(\BR) \times \CM_0 \rightarrow V(\BR)$$ with the last map being the natural projection. 
Applying $\wt{\nu}^{-1}$ (whose fibers are of dimension $0$ because $\nu_{\GS}$ is injective) to the set above and noticing that $u_{\sJ}\circ \wt{\nu}=\nu_{\GS}\circ u_S$, we have
\[
u_S^{-1}\left(S^{\mathrm{Betti}}(1)\right) = \{\wt{s} \in \wt{S} : \dim_{\wt{s}} (\wt{\varphi}_{\sJ}\circ \wt{\nu})^{-1}(\{\wt{s}'_V\} \times \CM_0\}) \ge 1\}.
\] 
So we can conclude because $ \wt{\varphi}_{\sJ}\circ \wt{\nu}=\wt{\varphi} $.
\end{proof}

\subsection{Mumford--Tate domains}
In practice, we often need to study subsets of $\CM$ parametrizing Hodge structures with extra data, namely the 
 {\it Mumford--Tate domains}. Let us briefly recall the definition. 
Each point $x \in \CM$ parametrizes a $\BQ$-mixed Hodge structure on $V\oplus \BQ$, and let $\mathrm{MT}_x < \mathrm{GL}(V\oplus \BQ)$ be its Mumford--Tate group. 
It is known that $\mathrm{MT}_x$ is a subgroup of $\mathbf{G}^{\CM}$ and has unipotent radical $V \cap \mathrm{MT}_x$.

A {\it Mumford--Tate domain} in $\CM$ is a subset of the form $\mathbf{G}(\BR)^+x \subseteq \CM$ with $x \in \CM$ and $\mathbf{G} = \mathrm{MT}_x$. 
Mumford--Tate domains are easily seen to be semi-algebraic in $\CM$, and are known to be complex analytic in $\CM$.

Recall the group quotient  $p \colon \mathbf{G}^{\CM} = V \rtimes  \mathbf{G}_0^{\CM} \rightarrow \mathbf{G}_0^{\CM}$ and the induced projection $p \colon \CM = V(\BR) \times \CM_0 \rightarrow \CM_0$. 

Let $\sD = \mathbf{G}(\BR)^+x$ be a Mumford--Tate domain in $\CM$. Denote by $V_{\sD} := V \cap \mathbf{G}$ the unipotent radical of $\mathbf{G}$ and by $\sD_0 := p(\sD) \subseteq \CM_0$.
\begin{lemma}\label{LemmaMTDomain}
Under the identification $\CM = V(\BR) \times \CM_0$, we have $\sD = (v_0 + V_{\sD}(\BR)) \times \sD_0$ for some $v_0 \in V(\BQ)$.
\end{lemma}
\begin{proof} Denote by $\mathbf{G}_0 := \mathbf{G}/V_{\sD}$ the reductive part of $\mathbf{G}$, which can be seen as a subgroup of $\mathbf{G}_0^{\CM} = \mathbf{G}^{\CM}/V$. 
Now $\mathbf{G}_0$ can be seen as a subgroup of $\mathbf{G}^{\CM}$ via $\mathbf{G}_0 < \mathbf{G}_0^{\CM} = \{0\}\times \mathbf{G}_0^{\CM} < V \rtimes  \mathbf{G}_0^{\CM} = \mathbf{G}^{\CM}$. Thus we obtain a subgroup $\mathbf{G}':=V_{\sD} \rtimes \mathbf{G}_0$ of $\mathbf{G}^{\CM}$. The orbit $\mathbf{G}'(\BR)^+x$ is $V_{\sD}(\BR) \times \sD_0$ in $V(\BR) \times \CM_0 = \CM$.

Now compare $\mathbf{G}$ and $\mathbf{G}'$. Take a Levi decomposition $\mathbf{G} = V_{\sD} \rtimes' \mathbf{G}_0$, then there exists a compatible Levi decomposition of $\mathbf{G}^{\CM} = V \rtimes' \mathbf{G}_0^{\CM}$ as in \cite[proof of Prop.~2.9]{GaoTowards-the-And}.  General group theory says that this new Levi decomposition differs from the original one $\mathbf{G}^{\CM} = V \rtimes  \mathbf{G}_0^{\CM} $ by the conjugation by some $v_0 \in V(\BQ)$. So $\mathbf{G} = v_0 \mathbf{G}' v_0^{-1}$. So $\mathbf{G}(\BR)^+ x = (v_0+V_{\sD}(\BR)) \times \sD_0$ by the last sentence of the paragraph above. 
\end{proof}

\subsection{Quotient by a normal subgroup and weak Mumford--Tate domains}
Let $\sD = \mathbf{G}(\BR)^+x$ be a Mumford--Tate domain and let $N \lhd \mathbf{G}$ be a normal subgroup. By \cite[Prop.~5.1]{GKAS}, we have a quotient in the category of complex varieties
\[
p_N \colon \sD \rightarrow \sD/N
\]
with each fiber being an $N(\BR)^+$-orbit. 
For the arithmetic subgroup $\Gamma$ of $\mathbf{G}^{\CM}(\BQ)$ in $\mathsection$\ref{SubsectionPeriodMap}, let $\Gamma_{\mathbf{G}} := \Gamma \cap \mathbf{G}(\BQ)$ and $\Gamma_{\mathbf{G}/N} : =\Gamma_{\mathbf{G}}/(\Gamma\cap N(\BQ))< (\mathbf{G}/N)(\BQ)$. Then $p_N$ induces
\begin{equation}\label{EqpNQuotientAfterModGamma}
[p_N] \colon \Gamma_{\mathbf{G}}\backslash \sD \rightarrow \Gamma_{\mathbf{G}/N} \backslash (\sD/N).
\end{equation}

We will pay special attention to the fibers of $p_N$, for which we define:
\begin{defn}\label{DefnWSP}
A subset $\sD_N$ of $\sD$ is called a {\it weak Mumford--Tate domain} if there exist $x \in \sD$ and a normal subgroup $N$ of $\mathrm{MT}_x$ such that $\sD_N = N(\BR)^+x$.
\end{defn}
Weak Mumford--Tate domains are easily seen to be semi-algebraic in $\CM$, and are known to be also complex analytic in $\CM$. 

\begin{lemma}\label{LemmaWeakMTDomain}
For a weak Mumford--Tate domain $\sD_N$ as in Definition~\ref{DefnWSP}, under the identification $\CM = V(\BR) \times \CM_0$ we have $\sD_N = (v + V_N(\BR)) \times p(\sD_N)$ for some $v\in V(\BR)$, where $V_N = V \cap N$.
\end{lemma}
\begin{proof}
By Lemma~\ref{LemmaMTDomain}, we have $\sD = (v_0 + V_{\sD}(\BR)) \times \sD_0$, with $V_{\sD} = V \cap \mathbf{G}$ and $v_0 \in V(\BQ)$. Write $\mathbf{G}_0 := \mathbf{G}/V_{\sD}$, and $N_0 := N/V_N \lhd \mathbf{G}_0$. Then $N_0$ acts on $V$ via $N_0 \lhd \mathbf{G}$. Notice that $p(\sD_N) \subseteq \sD_0$. Write $x = (v_0 + v_x, x_0) \in V(\BR) \times \CM_0$, with $v_x \in V_{\sD}(\BR)$.

Since $N \lhd \mathbf{G}$, we have: (i) $V_N$ is a $\mathbf{G}$-module; (ii) the induced action of $N_0 \lhd \mathbf{G}_0$ on $V_{\sD}/V_N$ is trivial. Thus $g \cdot v_x + V_N(\BR) = v_x + V_N(\BR)$ for all $g \in N_0(\BR)^+$. So we can conclude with $v = v_0 + v_x$.
\end{proof}

\section{Non-vanishing of the Betti form}\label{SectionBettiRank}
In this section we prove that the top wedge power of the Betti form is not identically zero on $\sM_g$, and that the Betti stratum is Zariski closed.
\begin{thm}\label{ThmBettiRankFormulaMain}
We have $\beta_{\mathrm{GS}}^{\wedge \dim \sM_g} \not\equiv 0$ for $g \ge 3$.
\end{thm}

\begin{thm}\label{ThmZariskiClosedDegLoci}
For any subvariety $S\subseteq \sM_g$, the subset 
$$S^{\mathrm{Betti}}(1) := \{s \in S(\BC): \dim_{\nu_{\mathrm{GS}}(s)} (\nu_{\mathrm{GS}}(S) \cap \CF_{\mathrm{Betti}}) \ge 1 \}$$ 
is Zariski closed in $S$. 
\end{thm}

The proofs of Theorem~\ref{ThmBettiRankFormulaMain} and Theorem~\ref{ThmZariskiClosedDegLoci} are simultaneous and follow the guideline of the first-named author's \cite{GaoBettiRank} on the case when $\sJ(\BV_{\BZ})$ is polarizable. In fact, without much effort our proof can be generalized to show the following criterion: For any subvariety $S \subseteq \sM_g$, we have 
\small
\begin{equation}\label{EqCritBettiRank}
\beta_{\GS}|_S^{\wedge \dim S} \not\equiv 0 \Longleftrightarrow \dim S \le \dim [p_N](S) + \frac{1}{2}\dim_{\BQ}(V\cap N) \text{ for any }N \lhd \mathrm{MT}(\nu_{\GS}(S)),
\end{equation}
\normalsize
where $\mathrm{MT}(\nu_{\GS}(S))$ is defined at the end of $\S$\ref{SubsectionPeriodMap}. 
We will not prove \eqref{EqCritBettiRank} in this section but refer to Theorem~\ref{ThmBettiRankGeneralFamily} for the more general situation.


For the proof, we will use the notation from \eqref{EqPeriod}. In particular, the uniformization
\[
u \colon \CM \rightarrow \Gamma\backslash \CM 
\]
and the period map $\varphi \colon S \rightarrow \Gamma\backslash \CM$ which is injective. By abuse of notation we use $S$ to denote $\varphi(S)$. Then $S$ is a complex analytic subvariety of $\Gamma\backslash\CM$. By the main result of \cite{BBKT}, we can endow $\Gamma\backslash\CM$ with the following natural o-minimal definable structure on  such that $S$ is a definable subvariety: let $\mathfrak{F} \subseteq \CM$ be a real semi-algebraic fundamental domain for the action of $\Gamma$ on $\CM$, then as sets $\Gamma\backslash\CM$ can be identified with $\mathfrak{F}$, and this identification gives $\Gamma\backslash\CM$ a real semi-algebraic structure, which is definable in $\BR_{\mathrm{an},\exp}$.

\subsection{Bi-algebraic system and Ax--Schanuel}
\begin{prop}[{\cite[Cor.~6.7]{BBKT}}]\label{PropWeakMTBialg}
Let $\sD_N$ be a weak Mumford--Tate domain defined in Definition~\ref{DefnWSP}. Then $u(\sD_N) \cap S$ is a closed algebraic subset of $S$. 
\end{prop}
This proposition follows immediately from definable Chow because $u(\sD_N) \cap S$ is naturally complex analytic and is definable by the definition of the o-minimal definable structure on $\Gamma\backslash\CM$ given by \cite{BBKT}.

\begin{defn}\label{DefnWeaklySpecialClosure}
\begin{enumerate}
\item[(i)] Let $\wt{Y} \subseteq \CM$ be a complex analytic irreducible subset. The \textbf{weakly special closure} of $\wt{Y}$, denoted by $\wt{Y}^{\mathrm{ws}}$, is the smallest weak Mumford--Tate domain in $\CM$ which contains $\wt{Y}$.
\item[(ii)] Let $Y \subseteq S$ be an irreducible subvariety. The \textbf{weakly special closure} of $Y$, denoted by  $ Y^{\mathrm{ws}}$, is $u(\widetilde{Y}^{\mathrm{ws}})$ for one (hence any) complex analytic irreducible component $\widetilde{Y}$ of $u^{-1}(Y)$.
\end{enumerate}
\end{defn}

We also need to use the algebraic structure on $\CM$ (resp. on $\CM_0$), usually defined by the natural inclusion of $\CM$ into a complex algebraic variety $\CM^\vee$ (resp. into $\CM_0^\vee$). For our purpose, it is more convenient to use the following equivalent definition (the equivalence follows from \cite[Lem.~B.1 and proof]{KlinglerThe-Hyperbolic-}).
\begin{defn}
A subset of $\CM$ (resp. of $\CM_0$) is said to be {\it irreducible algebraic} if it is both complex analytic irreducible and semi-algebraic.
\end{defn}

The following Ax--Schanuel theorem for VMHS was independently proved by Chiu \cite{KennethChiuAS} and Gao--Klingler \cite{GKAS}.

\begin{thm}[weak Ax--Schanuel for VMHS]\label{ThmAS}
Let $\wt{Z} \subseteq u^{-1}(S)$ be a complex analytic irreducible subset, with $\wt{Z}^{\mathrm{Zar}}$ the smallest irreducible algebraic subset containing $\wt{Z}$. Then
\begin{equation}\label{EqWAS}
\dim \wt{Z}^{\mathrm{Zar}} + \dim u(\wt{Z})^{\mathrm{Zar}}  \ge \dim  \wt{Z}^{\mathrm{ws}} + \dim \wt{Z},
\end{equation}
where $\wt{Z}^{\mathrm{ws}}$ is the smallest weak Mumford--Tate domain which contains $\wt{Z}$.
\end{thm}
\begin{proof}
Let $Y := u(\wt{Z})^{\mathrm{Zar}}$. 
Let 
$$\mathscr{Z} := \{(z,y) \in \wt{Z} \times Y(\BC) : u(z) = y\},$$ then $\mathscr{Z}$ is a complex analytic irreducible subset of $\CM \times_{\Gamma\backslash\CM} Y'$. The Zariski closure of $\mathscr{Z}$ in $\CM \times Y$ is contained in $\wt{Z}^{\mathrm{Zar}} \times Y$, 
 and $\dim \mathscr{Z} = \dim \wt{Z}$. Then \eqref{EqWAS} is a direct consequence of the mixed Ax--Schanuel theorem \cite[Thm.~1.1]{GKAS} applied to $\mathscr{Z}$. 
\end{proof}

We close this introductory subsection with the following definition. In practice, we often need to work with algebraic subvarieties $Y\subseteq S$, which are not weak Mumford--Tate domains, and the following number measures how far it is from being one.
\begin{equation}
\delta_{\mathrm{ws}}(Y) := \dim  Y^{\mathrm{ws}} - \dim Y.
\end{equation}

\begin{defn}
An irreducible algebraic subvariety $Y$ of $S$ is called {\em weakly optimal} if the following holds: 
$$Y \subsetneq Y' \subseteq S \Rightarrow \delta_{\mathrm{ws}}(Y) < \delta_{\mathrm{ws}}(Y'),$$ for any $Y' \subseteq S$ irreducible.
\end{defn}

\subsection{Applications of Ax--Schanuel}
The following proposition is an application of mixed Ax--Schanuel.
\begin{prop}\label{PropBettiRankNonMaxAndDegLocus}
Recall the projection $[p] \colon \Gamma\backslash\CM \rightarrow \Gamma_0 \backslash \CM_0$ from \eqref{EqPeriod}. Then 
$S^{\mathrm{Betti}}(1)$
is contained in the union of weakly optimal subvarieties $Y \subseteq S$ satisfying
\begin{equation}\label{EqDegenerateMemberUnion}
\dim Y \ge \dim Y^{\mathrm{ws}} -  \dim [p](Y^{\mathrm{ws}}) +1.
\end{equation}
\end{prop}
\begin{proof}
It suffices to prove two things:
\begin{enumerate}
\item[(i)] $S^{\mathrm{Betti}}(1)$ is covered by the union of irreducible subvarieties $Y \subseteq S$ satisfying \eqref{EqDegenerateMemberUnion} (without requiring $Y$ to be weakly optimal);
\item[(ii)] If $Y \subseteq S$ is an irreducible subvariety satisfying \eqref{EqDegenerateMemberUnion} and is maximal for this property with respect to inclusions, then $Y$ is weakly optimal.
\end{enumerate}

Let us prove (i). By Lemma~\ref{LemmaBettiFoliationBettiRank}, $S^{\mathrm{Betti}}(1) $ is covered by irreducible subvarieties $Y \subseteq S$ such that
\[
Y:= \overline{u(\{a\} \times \wt{C})}^{\mathrm{Zar}},
\]
 for some complex analytic curve $\wt{C} \subseteq \CM_0$  and some $a \in V(\BR)$.

Apply mixed Ax--Schanuel in this context (Theorem~\ref{ThmAS} to $\{a\} \times \wt{C}$). Then we get
\begin{equation}\label{EqAppAS}
\dim (\{a\} \times \wt{C})^{\mathrm{Zar}} + \dim Y\ge \dim ( \{a\} \times \wt{C} )^{\mathrm{ws}} + 1.
\end{equation}
The following is easy to check by the discussion below \eqref{EqSemiAlgebraicStructureCM}: For the smallest irreducible algebraic subset $\wt{C}^{\mathrm{Zar}}$ of $\CM_0$ which contains $\wt{C}$, the subset 
$$\{a\} \times \wt{C}^{\mathrm{Zar}} \subseteq V(\BR) \times \CM_0 = \CM$$ is both semi-algebraic and complex analytic in $\CM$. Therefore $(\{a\} \times \wt{C})^{\mathrm{Zar}}  = \{a\} \times \wt{C}^{\mathrm{Zar}}$, and thus $\dim (\{a\} \times \wt{C})^{\mathrm{Zar}} =  \dim \wt{C}^{\mathrm{Zar}}$. So \eqref{EqAppAS} implies
\[
\dim Y  \ge \dim ( \{a\} \times \wt{C})^{\mathrm{ws}} -  \dim \wt{C}^{\mathrm{Zar}} + 1.
\]
So, to prove (i), it suffices to prove $\dim   ( \{a\} \times \wt{C} )^{\mathrm{ws}}  = \dim   Y^{\mathrm{ws}}$ and $\dim  \wt{C}^{\mathrm{Zar}} \le \dim [p](  Y^{\mathrm{ws}})$.

Let us prove $u( ( \{a\} \times \wt{C} )^{\mathrm{ws}} ) =  Y^{\mathrm{ws}}$; the upshot is $\dim  ( \{a\} \times \wt{C} )^{\mathrm{ws}}  = \dim   Y^{\mathrm{ws}}$. First of all, we have $u( ( \{a\} \times \wt{C} )^{\mathrm{ws}} ) = u( \{a\} \times \wt{C} )^{\mathrm{ws}} $ by Definition~\ref{DefnWeaklySpecialClosure}. By definition of $Y$, we have $u(\{a\} \times \wt{C}) \subseteq Y$. Hence $u(  \{a\} \times \wt{C} )^{\mathrm{ws}}   \subseteq  Y^{\mathrm{ws}}$. On the other hand, $u( ( \{a\} \times \wt{C} )^{\mathrm{ws}})  \cap S$ is closed algebraic by Proposition~\ref{PropWeakMTBialg}, so 
$$Y \subseteq u( ( \{a\} \times \wt{C} )^{\mathrm{ws}}  ) = u( \{a\} \times \wt{C} )^{\mathrm{ws}}.$$ So $ Y^{\mathrm{ws}} \subseteq u( \{a\} \times \wt{C} )^{\mathrm{ws}}$. Now we have established 
$$ Y^{\mathrm{ws}} = u( \{a\} \times \wt{C} )^{\mathrm{ws}} = u( ( \{a\} \times \wt{C} )^{\mathrm{ws}} ).$$

Let us prove $\dim  \wt{C}^{\mathrm{Zar}} \le \dim [p](  Y^{\mathrm{ws}})$. We have 
$$(\{a\}\times\wt{C})^{\mathrm{Zar}} \subseteq (\{a\}\times\wt{C})^{\mathrm{ws}}.$$
So $\{a\} \times \wt{C}^{\mathrm{Zar}} \subseteq (\{a\}\times\wt{C})^{\mathrm{ws}}$. Apply $p \colon \CM \rightarrow \CM_0$ to both sides, we get $$\wt{C}^{\mathrm{Zar}} \subseteq p((\{a\}\times\wt{C})^{\mathrm{ws}}).$$
So $$\dim  \wt{C}^{\mathrm{Zar}} \le \dim p((\{a\}\times\wt{C})^{\mathrm{ws}}) = \dim (u_0\circ p)((\{a\}\times\wt{C})^{\mathrm{ws}})$$ for the uniformization $u_0 \colon \CM_0 \rightarrow \Gamma_0 \backslash\CM_0$. But $u_0\circ p = [p]\circ u$ by \eqref{EqPeriod}. So 
$$\dim  \wt{C}^{\mathrm{Zar}} \le[p]\circ u( ( \{a\} \times \wt{C} )^{\mathrm{ws}} ),$$ and the right hand side equals $\dim [p](Y^{\mathrm{ws}})$ by last paragraph.

Hence we are done for (i). 

For (ii), let $Y \subseteq Y' \subseteq X$. Assume $\delta_{\mathrm{ws}}(Y) \ge \delta_{\mathrm{ws}}(Y')$, \textit{i.e.} 
\[
\dim  Y^{\mathrm{ws}} - \dim Y \ge \dim  Y^{\prime,\mathrm{ws}} - \dim Y'.
\]
The assumption on $Y$ implies $$\dim  Y^{\mathrm{ws}} - \dim Y \le \dim [p](   Y^{\mathrm{ws}} ) -1.$$
Combined with the inequality above, we obtain $$\dim  Y^{\prime,\mathrm{ws}} - \dim Y' \le  \dim [p](   Y^{\prime,\mathrm{ws}} ) -1$$ because $Y \subseteq Y'$. Therefore $Y = Y'$ by maximality of $Y$. Hence, (ii) is established.
\end{proof}

To continue, we need the following finiteness proposition for weakly optimal subvarieties in $S$ \textit{\`{a} la Ullmo}, which itself is an application of mixed Ax--Schanuel. 
In recent literature, it is called {\it Geometric Zilber--Pink} and  weakly optimal subvarieties are called {\it monodromically atypical maximal}. When $\sJ(\BV_{\BZ})$ is polarizable, \textit{i.e.} in the Shimura case, it was proved by the first-named author \cite[Thm.~1.4]{GaoMixedAS}. The version we need here is  Baldi--Urbanik \cite[Thm.~7.1]{BaldiUrbanik} {\it in the case of weight $-1$ and $0$} (in this case the factor $N^u(\BC)$ in \cite[Thm.~7.1]{BaldiUrbanik} is not needed because of Lemma~\ref{LemmaMTGroup}). 
\begin{prop}\label{PropFinitenessBogomolov}
There exist finitely many pairs $(\sD_1,N_1), \ldots, (\sD_k,N_k)$, with $\sD_j = \mathbf{G}_j(\BR)^+x_j$ a Mumford--Tate domain in $\CM$ and $N_j \lhd \mathbf{G}_j$, such that the following holds. For each weakly optimal subvariety $Y \subseteq S$,  $ Y^{\mathrm{ws}}$ is a fiber of $[p_{N_j}]$ for some $j \in \{1,\ldots,k\}$.
\end{prop}

Here $[p_{N_j}]$ is the complex analytic map defined as in \eqref{EqpNQuotientAfterModGamma} with $\mathbf{G}$ replaced by $\mathbf{G}_j$ and $N$ replaced by $N_j$. More precisely, 
denote by $\Gamma_j = \Gamma \cap \mathbf{G}_j(\BQ)$ and $\Gamma_{j,/N_j} = \Gamma_j / (\Gamma_j \cap N_j(\BQ))$. Then $[p_{N_j}]$ is the natural projection $u(\sD_j) = \Gamma_j \backslash\sD_j \rightarrow \Gamma_{j,/N_j}\backslash (\sD_j/N_j)$, which is easily seen to be real semi-algebraic (hence definable) for the definable structures given above Proposition~\ref{PropWeakMTBialg}.


\subsection{Proof of Theorem~\ref{ThmBettiRankFormulaMain} and Theorem~\ref{ThmZariskiClosedDegLoci}}\label{SubsectionProofGeom}
For each $j \in \{1,\ldots,k\}$, Proposition~\ref{PropWeakMTBialg} says that $u(\sD_j)\cap S$ is a closed algebraic subset of $S$.  The restriction 
\[
[p_{N_j}]|_S \colon u(\sD_j) \cap S \rightarrow \Gamma_{j,/N_j}\backslash (\sD_j/N_j)
\]
is both complex analytic and  definable because $[p_{N_j}]$ is. 
Hence the subset
\begin{equation}\label{EqEjSet}
E_j := \left\{ s\in u(\sD_j) \cap S : \dim_s [p_{N_j}]|_S^{-1}([p_{N_j}](s)) \ge \frac{1}{2}\dim_{\BQ} (V \cap N_j) +1 \right\}
\end{equation}
is both definable and complex analytic in $u(\sD_j)\cap S$. So $E_j$ is algebraic by definable Chow. Moreover, it is closed in $u(\sD_j)\cap S$ by the upper semi-continuity of fiber dimensions. So $E_j$ is a closed algebraic subset of $S$.

\begin{lemma}\label{PropFoliationDegLociSubset}
$S^{\mathrm{Betti}}(1) \subseteq \bigcup\nolimits_{j=1}^k E_j$.
\end{lemma}
\begin{proof}
By Proposition~\ref{PropBettiRankNonMaxAndDegLocus}, $S^{\mathrm{Betti}}(1)$ 
 is covered by weakly optimal $Y \subseteq S$ such that 
 $\dim Y \ge  \dim  Y^{\mathrm{ws}} -  \dim [p]( Y^{\mathrm{ws}}) +1$.  By Proposition~\ref{PropFinitenessBogomolov}, $ Y^{\mathrm{ws}}$ is a fiber $[p_{N_j}]$ for some $j \in \{1,\ldots,k\}$. Each fiber of $p \colon \CM \rightarrow \CM_0$ is a $V(\BR)$-orbit and each fiber of $p_{N_j}$ is an $N_j(\BR)^+$-orbit. So  $$\dim_{\BR}  Y^{\mathrm{ws}} - \dim_{\BR} [p]( Y^{\mathrm{ws}}) = \dim_{\BQ}  (V \cap N_j).$$ So $\dim Y \ge  \frac{1}{2}\dim_{\BQ}  (V \cap N_j)+1$. So $Y \subseteq E_j$ because $[p_{N_j}](Y)$ is a point.
\end{proof}

Now we are ready to prove Theorem~\ref{ThmBettiRankFormulaMain} and Theorem~\ref{ThmZariskiClosedDegLoci}.
\begin{proof}[Proof of Theorem~\ref{ThmBettiRankFormulaMain}]
Let $S \subseteq \sM_g$ be a subvariety. 
Assume $S^{\mathrm{Betti}}(1) = S$. Then Lemma~\ref{PropFoliationDegLociSubset} implies that $\bigcup_{j=1}^k E_j = S$. But each $E_j$ is Zariski closed in $\sM_g$. Hence $\sM_g = E_j$ for some $j \in \{1,\ldots,k\}$. Up to renumbering we may assume $S = E_1$. Now we have the Mumford--Tate domain $\sD_1 = \mathbf{G}_1(\BR)^+x_1$, and a normal subgroup $N_1\lhd \mathbf{G}_1$. By definition, $E_1 \subseteq u(\sD_1)\cap S$, so $S \subseteq u(\sD_1)$.

Now let us consider the case $S = \sM_g$. Then  $\mathbf{G}_1 = V\rtimes \mathrm{GSp}_{2g}$ since $\sM_g \subseteq u(\sD_1)$. The normal subgroup $N_1$ can be chosen such that the reductive part is semi-simple, \textit{i.e.} $N_1 \lhd V\rtimes \mathrm{Sp}_{2g}$; see the statement of \cite[Thm.~7.1]{BaldiUrbanik}. 
Since $V$ is irreducible as an $\mathrm{Sp}_{2g}$-module by Proposition~\ref{PropGSNFSubVHS}.(iii), the only normal subgroups of $V\rtimes \mathrm{Sp}_{2g}$ are $0$, $V$, and $V\rtimes \mathrm{Sp}_{2g}$.

If $N_1 = 0$, then $[p_{N_1}]$ is identity and $E_1 = \{s \in\sM_g(\BC): 0 \ge 1\}$ is empty.

If $N_1 = V$, then each fiber of $[p_{N_1}]$ has $\BC$-dimension $\frac{1}{2}\dim_{\BQ}  V$. So 
$$E_1 \subseteq \{s\in \sM_g(\BC): \frac{1}{2}\dim_{\BQ}  V \ge \frac{1}{2}\dim_{\BQ}  V + 1\}$$ is empty.

If $N_1 = V\rtimes \mathrm{Sp}_{2g}$, then $\sD_1/N_1$ is a point. So 
\begin{align*}
E_1 =&\left \{ s \in \sM_g(\BC) : \dim \sM_g \ge \frac{1}{2}\dim_{\BQ}  V + 1\right\}\\
 =&\left \{s\in \sM_g(\BC) : 3g-3 \ge \frac{1}{2}\binom{2g}{3} - \frac{1}{2}g + 1\right\},
 \end{align*}
  which is again empty for $g \ge 3$.

Thus we get a contradiction to $\sM_g = E_1$. So $\sM_g^{\mathrm{Betti}}(1) \not= \sM_g$. So $\beta_{\GS}^{\wedge (3g-3)} \not\equiv 0$ by Proposition~\ref{CorBettiFormBettiRank}. We are done.
\end{proof}

\begin{proof}[Proof of Theorem~\ref{ThmZariskiClosedDegLoci}]
By Lemma~\ref{PropFoliationDegLociSubset} and because each $E_j$ is Zariski closed in $S$, it suffices to prove that
\begin{equation}
\bigcup\nolimits_{j=1}^k E_j \subseteq S^{\mathrm{Betti}}(1).
\end{equation}

Now let us prove $E_j \subseteq S^{\mathrm{Betti}}(1)$. To ease notation, use $E$ (resp. $\sD$, $N$) to denote $E_j$ (resp. $\sD_j$, $N_j$).

For each $s \in E$, by definition there exists a complex analytic irreducible $$\wt{Y} \subseteq u^{-1}(S) \cap \sD$$ such that $s \in u(\wt{Y})$, $\dim \wt{Y} \ge \frac{1}{2}\dim_{\BQ}  (V \cap N)+1$, and that $\wt{Y}$ is contained in a fiber of the quotient $\sD \rightarrow \sD/N$. The last condition implies that $\wt{Y} \subseteq N(\BR)^+x$ for some $x \in \sD$. 

Under the identification $\CM = V(\BR) \times \CM_0$, by Lemma~\ref{LemmaWeakMTDomain} we have 
$$N(\BR)^+x = (v+V_N(\BR)) \times p(\sD)$$ where $V_N = V \cap N$ and $v \in V(\BR)$, for the projection $p \colon \CM \rightarrow \CM_0$. Now 
for each $$(a,x_0) \in \wt{Y} \subseteq (v+V_N(\BR)) \times \CM_0,$$ there exists a complex analytic curve $\wt{C} \subseteq \CM_0$  such that $\{a\} \times \wt{C} \subseteq \wt{Y}$ because $\dim  \wt{Y} \ge \frac{1}{2}\dim_{\BQ}  V_N+1$. Hence $s \in S^{\mathrm{Betti}}(1)$ by Lemma~\ref{LemmaBettiFoliationBettiRank}. This proves $E \subseteq S^{\mathrm{Betti}}(1)$, and we are done.
\end{proof}

\section{Proof of main theorem}\label{SubsectionBignessProof}
\subsection{Proof of the height comparison}
By \cite[Thm.~5.3.5]{YZ21}, lower bounds of the height function $h_{\overline{\CL}}$ correspond to  bigness properties of $\overline{\CL}$. In this paper, we prove the bigness of the {\em generic fiber} (or called the {\it geometric part}) $\wt{\CL}$ of $\overline{\CL}$ (see \cite[$\S$1.1.5]{YZ21} for the definition) and deduce the desired height comparison from it. 

\begin{thm}\label{ThmBigAdeLB}
Assume $g \ge 3$. Then the adelic line bundle $\wt{\CL}$ is big, \textit{i.e.} $\widehat{\mathrm{vol}}(\wt{\CL}) > 0$.
\end{thm}
\begin{proof}
By the flatness of extension $\BQ\subseteq \BC$, we have $\widehat{\mathrm{vol}}(\wt\CL)=\widehat{\mathrm{vol}} (\wt\CL_\BC)$. 
So $$\widehat{\mathrm{vol}}(\wt{\CL}) =  \int_{\sM_g(\BC)}\beta_{\GS}^{\wedge (3g-3)}$$ by Theorem~\ref{PropHSforGS}. Now $\beta_{\GS}^{\wedge (3g-3)} \ge 0$ since $\beta_{\GS}$ is semi-positive, and $\beta_{\GS} ^{\wedge (3g-3)} \not\equiv 0$ by Theorem~\ref{ThmBettiRankFormulaMain}. So $\widehat{\mathrm{vol}}(\wt{\CL}) > 0$.
\end{proof}

Now, we are ready to prove our main theorem.
\begin{proof}[Proof of Theorem~\ref{MainTheorem}]
It suffices to prove the height comparison \eqref{EqGSComparison} for $\pair{\GS(s),  \GS(s)}_{\mathrm{BB}}$ because, by work of the second-named author \cite{Zh10}, the Ceresa cycles and Gross–Schoen have the same height up to some positive multiple. 
Next by \cite[Cor.~2.5.2]{Zh10}, it suffices to prove  \eqref{EqGSComparison} for $e_s = \xi_s$, \textit{i.e.} there exists a Zariski open dense subset $U$ of $\sM_g$ defined over $\BQ$ and positive numbers $\epsilon$ and $c$ such that
\begin{equation}\label{EqGSComparison2}
\pair{\GS(\xi_s),  \GS(\xi_s)}_{\mathrm{BB}} \ge \epsilon h_{\Fal}(C_s) - c
\end{equation}
for all $s \in U(\IQbar)$.  
The conclusion follows immediately from Theorem~\ref{ThmBigAdeLB} and \cite[Thm.~5.3.5.(iii)]{YZ21}. 
 Notice that one can either take the adelic line bundle $\overline{\CM}$ on $\sM_g$ which defines the Faltings height (it exists by \cite[$\S$2.6.2]{YZ21}), or one can take any ample line bundle on a suitable compactification of $\sM_g$ and then use the comparison of the logarithmic Weil height and the Faltings height.
\end{proof}

\subsection{Description of the ample locus $\sM_g^{\mathrm{amp}}$}

We start with the following proposition, which is a consequence of our volume identity.
\begin{prop}\label{cor-rationality}
Let $S$ be a subvariety of $\sM_g$ defined over a subfield $k$ of $\BC$. 
Then $S^{\mathrm{Betti}} (1)$ is also defined over $k$. In particular $\sM_g^{\mathrm{Betti}}(1)$ is defined over $\BQ$.
\end{prop}
\begin{proof}
The definition of the volume is
\[
\widehat{\mathrm{vol}}(\wt{\CL}_{\BC}|_S) := \lim_{m\rightarrow \infty}\frac{(\dim S)!}{m^{\dim S}} \hat{h}^0(S,\wt{\CL}_{\BC}|_S^{\otimes m})
\]
where $\hat{h}^0(S,\wt{\CL}_{\BC}|_S^{\otimes m})$ is the dimension of the space of effective sections of $\wt{\CL}_{\BC}|_S^{\otimes m}$. For a precise definition, we refer to \cite[Defn.~5.1.3 and Thm.~5.2.1]{YZ21}.

Let $S'$ be an irreducible component of  $S^{\mathrm{Betti}} (1)$. Then $S' = S^{\prime \mathrm{Betti}}(1)$, and hence $\beta_{\GS}^{\wedge \dim S'} \equiv 0$ on $S'$ by Corollary~\ref{CorBettiFormBettiRank}. So  $\widehat{\mathrm{vol}}(\wt{\CL}_{\BC}|_{S'})=0$ by Theorem~\ref{PropHSforGS}. 
 For any automorphism $\sigma\in \Aut (\BC/k)$, let $S^{\prime\sigma}$ denote $S'\otimes _\sigma \BC$. Then $S^{\prime\sigma}$ is also a closed subvariety of $S_{\BC}$. Moreover $\dim S^{\prime\sigma}=\dim S'$ and $\hat{h}^0(S',\wt{\CL}_{\BC}|_{S'}^{\otimes m}) = \hat{h}^0(S^{\prime\sigma},\wt{\CL}_{\BC}|_{S^{\prime\sigma}}^{\otimes m})$. So 
 
 $$\widehat{\mathrm{vol}}(\wt\CL_{\BC}\otimes _\sigma \BC|_{S'\otimes _\sigma \BC})
=\widehat{\mathrm{vol}}(\wt{\CL}_{\BC}|_{S'})=0.$$
 
As $\wt\CL$ is defined over $\BQ$, Theorem~\ref{PropHSforGS} implies
$$\int_{S^{\prime\sigma}(\BC)}c_1(\overline{\CL}_{\BC})^{\wedge \dim S'}=0.$$
Thus $\beta_{\GS}^{\wedge \dim S'} \equiv 0$ on $S^{\prime\sigma}$ by Proposition~\ref{PropTwoCurvatureEqualAdeGS}, and therefore
$$S^{\prime\sigma} (\BC)=(S^{\prime\sigma})^{\mathrm{Betti}} (1)\subseteq S^{\mathrm{Betti}} (1).$$
This shows that $S^{\mathrm{Betti}}(1) \otimes _\sigma \BC=S^{\mathrm{Betti}}(1)$.
 Thus $S^{\mathrm{Betti}} (1)$ is defined over $k$.
\end{proof}
Now we are ready to prove our description of $\sM_g^{\mathrm{amp}}$ as in Remark~\ref{RemarkMainTheorem}. More precisely

\begin{thm}\label{ThmDescriptionAmpleLocus}
We have $\sM_g^\amp=\sM_g\setminus \sM_g^{\mathrm{Betti}}(1)$. Moreover $\GS(s)$ is non-torsion in $\mathrm{Ch}^2(C_s^3)$ for any $s \in \sM_g^\amp(\BC) \setminus \sM_g^\amp(\IQbar)$.
\end{thm}
\begin{proof}
The height inequality \eqref{EqGSComparison} is invariant under the $\mathrm{Gal}(\IQbar/\BQ)$-action. Hence $\sM_g^{\mathrm{amp}}$ is defined over $\BQ$.


We start by showing $\sM_g^{\mathrm{amp}} \supseteq \sM_g\setminus \sM_g^{\mathrm{Betti}}(1)$, or equivalently $\sM_g \setminus \sM_g^{\mathrm{amp}} \subseteq \sM_g^{\mathrm{Betti}}(1)$. 
By definition of $\sM_g^{\mathrm{amp}}$, the height inequality \eqref{EqGSComparison2} cannot hold for any non-empty open subvariety $V$ of $\sM_g \setminus \sM_g^{\mathrm{amp}}$. So by \cite[Thm.~5.3.7]{YZ21}, $\wt{\CL}$ cannot be big on any irreducible component $S$ of $(\sM_g \setminus \sM_g^{\mathrm{amp}})_{\IQbar}$, and hence $\beta_{\GS}|_{S(\BC)}^{\wedge \dim S}\equiv 0$ by the volume identity Theorem~\ref{PropHSforGS}. Thus $S_{\BC} = S_{\BC}^{\mathrm{Betti}}(1)$ by Proposition~\ref{CorBettiFormBettiRank}, and therefore is contained in  $\sM_{g,\BC}^{\mathrm{Betti}}(1)$. So $\sM_g \setminus \sM_g^{\mathrm{amp}} \subseteq \sM_{g,\BC}^{\mathrm{Betti}}(1)$. This proves $\sM_g \setminus \sM_g^{\mathrm{amp}} \subseteq \sM_g^{\mathrm{Betti}}(1)$ by Proposition~\ref{cor-rationality}. 

Next let us prove that $S:= \sM_g^{\mathrm{Betti}}(1) \cap \sM_g^\amp$ is empty. By definition of $\sM_g^\amp$, the inequality \eqref{EqGSComparison2} holds for all $s \in S(\IQbar)$. Assume $S\not=\emptyset$. Since $S \subseteq \sM_g^\amp$, we have that $\wt{\CL}|_Z$ is big by \cite[Thm.~5.3.8]{YZ21} for some irreducible component $Z$ of $S$, \textit{i.e} $\widehat{\mathrm{vol}}(\wt{\CL}|_Z) > 0$. But $Z \subseteq S \subseteq \sM_g^{\mathrm{Betti}}(1)$, so $\beta_{\GS}|_{Z(\BC)}^{\wedge\dim Z} \equiv 0$ by Proposition~\ref{CorBettiFormBettiRank}, and hence $\widehat{\mathrm{vol}}(\wt{\CL}|_Z) = 0$ by the volume identity Theorem~\ref{PropHSforGS}. This is a contradiction. So $S = \emptyset$.

Now we can conclude $\sM_g^\amp=\sM_g\setminus \sM_g^{\mathrm{Betti}}(1)$ by the last two paragraphs.

Take $s \in \sM_g(\BC) \setminus \sM_g(\IQbar)$, and let $\bar{s}$ be the $\IQbar$-Zariski closure of $s$ in $\sM_g$. Then $\dim \bar{s} \ge 1$. 
Assume $\GS(s)$ is torsion in $ \mathrm{Ch}^2(C_s^3)$. Then $\GS(t)$ is a torsion point of $ \mathrm{Ch}^2(C_t^3)$ for each $t \in \bar{s}(\IQbar)$, and hence $t\mapsto \AJ(\GS(t))$ is a torsion section of $\sJ^2(\sC_g^{[3]}/\sM_g)\times_{\sM_g} \bar{s} \rightarrow \bar{s}$.  So $\nu_{\GS}(\bar{s}) \subseteq \CF_{\mathrm{Betti}}$ by definition of the Betti foliation because $\dim \bar{s} \ge 1$. So $\bar{s} \subseteq \sM_g^{\mathrm{Betti}}(1)$ by definition \eqref{EqDefnBettiStratum}. We are done
\end{proof}

\section{A discussion on general families}\label{SectionFuture}
In this section, we explain some conjectures on the rationality of the Betti strata, the height inequality, and the volume identity.

Let $S$ be a quasi-projective complex variety,  $f \colon X \rightarrow S$ be a smooth family of projective varieties, and $N$ be a relatively ample line bundle on $X$. Let $Z$ be a family of homologically trivial $n$-cocycles in $X/S$ with $n\le \frac {d+1}2$, which is homologically trivial and primitive on each geometric fiber 
$X_s$ in the sense that $c_1(N_s)^{d+2-2n}\cdot Z_s=0$ in $\Ch^{d+2-n}(X_s)$. 

\subsection{Betti strata}
We have a normal function $\nu \colon S \rightarrow \sJ ^n(X/S)$ as defined by  \eqref{EqNormalFunctionFromFamily}, where $\sJ^n(X/S)$ is the intermediate Jacobian associate with the following VHS:
$$\BH^n:=\ker \left(c_1(N)^{d+2-2n} \colon R^{2n-1}f_*\BZ (n)\lra R^{2d-2n+3}f_*\BZ(d+2-n)\right).$$

We can define the Betti stratum $S^{\mathrm{Betti}} (1)$ as in \eqref{EqDefnBettiStratum}. The proof of Theorem~\ref{ThmZariskiClosedDegLoci} can be adapted to prove:
\begin{thm}\label{ThmBettiStrataZarClosedGeneralFamily}
The Betti stratum $S^{\mathrm{Betti}}(1)$ is Zariski closed in $S$.
\end{thm}
A detailed proof can be found in \cite{GZSurvey} (as for Theorem~\ref{ThmBettiRankGeneralFamily}). 
We propose the following conjecture on the rationality of the Betti strata.
\begin{conj} \label{conj-rationality}
Let $k$ be a subfield of $\BC$ over which $f$ is defined. Then $S^{\mathrm{Betti}} (1)$ is defined over $k$.
\end{conj} 
In this paper, we proved Conjecture~\ref{conj-rationality} for the Gross--Schoen cycles and Ceresa cycles for any subvariety $S$ of $\sM_g$ by a volume identity \eqref{EqVolume}; see Proposition~\ref{cor-rationality}. We expect this volume identity to be true for general family $f \colon X\rightarrow S$. 

\subsection{Height inequality}
Now, we assume that $f$ and $N$ are defined over a number field $K$; in general, any family $X/S$ can be embedded into one defined over a number field $K$ by the Lefschetz principle. {\em Assume that the Beilinson--Bloch pairing is well-defined for the cycle $Z_s$ for 
each $s\in S(\IQbar)$.} This means that $Z_s$ can be extended to a cycle $\CZ_s$ on an integral model $\CX_s$ of $X_s$ over $O_{K(s)}$ 
such that the restrictions of $\CZ_s$ on special fibers are homologically trivial. Let $g_s$ be the Green's current on $X_s(\BC)$ of degree $(n-1, n-1)$ so that 
$\pp g_s=\delta _{Z_s(\BC)}$. Then we have an arithmetic cycle $\overline \CZ_s=(\CZ_s, g_s)$. The Beilinson--Bloch height  of $Z_s$ is defined in terms of arithmetic intersection number on $\CX_s$:
\begin{equation}
\label{eq-BB}
\pair{Z_s, Z_s}_{\mathrm{BB}}=(-1)^i \wh c_1(\overline \CN_s)^{d+1-2n}\overline \CZ_s\cdot \overline \CZ_s,
\end{equation}
where $\overline \CN_s$ is a Hermitian line bundle on $\CX_s$  extending  $N_s$.
They even conjectured that this pairing is non-negative and is zero if and only if $Z_s$ is torsion in the Chow group. 
See \cite{Be, Bl, GS91, GS94, Zh22} for details and various connections to the Grothendieck standard conjectures. 
Besides Gross--Schoen \cite{GS95} for the triple product of curves, we know that the height pairing is unconditionally defined on abelian varieties by K\"unnemann \cite{Ku}, and 
the product of a curve and a surface by the second-named author \cite{Zh24}.

In the following, we conjecture a height inequality as in our main Theorem~\ref{MainTheorem}.
We say that a function $\mathbf{h} \colon S(\IQbar) \rightarrow \BR$ is a {\em dominant height on an open subvariety $U$ of $S$ over $K$}, 
if there exist a immersion $S \subseteq \BP^N$ defined over $\IQbar$ 
 and positive numbers $\epsilon$ and $c$ such that 
\begin{equation}\label{EqComparison}
\mathbf{h}(s) \ge \epsilon h_{\mathrm{Weil}}(s)-c \qquad \text{ for all }s \in U(\IQbar),
\end{equation}
where $h_{\mathrm{Weil}}$ is the logarithmic Weil height on $\BP^N(\IQbar)$. 
Notice that if this condition is satisfied, then the height inequality  \eqref{EqComparison} remains true with $h_{\mathrm{Weil}}$ replaced by $h_{\overline M}$ for any integrable adelic line bundle $\overline M$ on $S$   up to changing $\epsilon>0$ and $c>0$. 

Write 
\[
S^\amp=S\setminus S^{\mathrm{Betti}} (1)
\]
 with respect to the normal function $\nu$ constructed from $Z$. We conjecture:

\begin{conj}\label{conj-inequality}
The Beilinson--Bloch height function $s \mapsto \pair{Z_s, Z_s}_{\mathrm{BB}}$ is a dominant height on $S^\amp$. Moreover, $S^\amp$ is the largest open subvariety of $S$ on which $s \mapsto \pair{Z_s, Z_s}_{\mathrm{BB}}$ is a dominant height. 
\end{conj}
In practice, in addition to this conjecture one also needs to check whether $S^\amp$ is empty, for which we use Theorem~\ref{ThmBettiRankGeneralFamily} below.
The second part of Conjecture~\ref{conj-inequality} is equivalent to: For any immersion $S \subseteq \BP^N$ defined over $\IQbar$, there exists a Zariski dense  
sequence $s_i\in (S\setminus S^\amp) (\IQbar)$ such that 
$$\lim _i h_{\mathrm{Weil}} (s_i)=\infty, \qquad \lim _i \frac {\pair{Z_{s_i}, Z_{s_i}}_{\mathrm{BB}}}{h_{\mathrm{Weil}} (s_i)}=0.$$

The proof of Theorem~\ref{ThmDescriptionAmpleLocus} in fact proves the following:
\begin{thm}\label{ThmAmpleLocusConjectureGS}
Conjecture~\ref{conj-inequality} holds in the case of Gross--Schoen and Ceresa cycles for any subvariety of $\sM_g$.
\end{thm}
A key ingredient to prove Theorem~\ref{ThmAmpleLocusConjectureGS} is the volume identity  \eqref{EqVolume}.  The same argument can be used to show that 
 the conjectured general volume identity \eqref{eq-volume} implies the full Conjecture~\ref{conj-inequality} with the help of \cite[Thm.~5.3.8]{YZ21}.

In the rest of this subsection let us explain the criterion to check $S^\amp \not=\emptyset$. As in $\S$\ref{SubsectionPeriodMap}, the normal function $\nu$ gives rise to a period map $\varphi_{\nu} \colon S \rightarrow \Gamma\backslash\CM$ where $\CM$ is a suitable parametrizing space of $\BQ$-mixed Hodge structures of weight $-1$ and $0$, and we can define the generic Mumford--Tate group $\mathrm{MT}(\nu(S))$ as at the end of $\S$\ref{SubsectionPeriodMap}.

\begin{thm}\label{ThmBettiRankGeneralFamily}
The followings are equivalent:
\begin{enumerate}
\item[(i)] $S^\amp \not=\emptyset$;
\item[(ii)] For any normal subgroup $N$ of the generic Mumford--Tate group $\mathrm{MT}(\nu(S))$, we have 
\begin{equation}\label{EqTrivialUpperBoundBettiRankBla}
\dim S \le \dim [p_N](\varphi_{\nu}(S)) + \frac{1}{2}\dim_{\BQ} (V \cap N).
\end{equation}
\end{enumerate}
\end{thm}
The proof is an adaption of our proof of Theorem~\ref{ThmBettiRankFormulaMain} executed in $\S$\ref{SectionBettiRank}. Let us give a sketch here. More details will be given in \cite{GZSurvey} where we prove a formula for the Betti rank $r(\nu)$ defined in \eqref{EqDefnBettiRank}.

Observe the easy upper bound  $r(\nu) \le \min\{ \dim S, \frac{1}{2}\dim_{\BQ} V\}$ by the description of the leaves of $\CF_{\mathrm{Betti}}$ below \eqref{EqPeriodJacobian}. This trivial upper bound can be easily generalized to: $r(\nu) \le $ the right hand side of \eqref{EqTrivialUpperBoundBettiRankBla} for any such $N$. 

Let us prove (i)$\Rightarrow$(ii). (i) implies $S \not= S^{\mathrm{Betti}}(1)$, and hence $r(\nu) = \dim S$ by Proposition~\ref{CorBettiFormBettiRank}. So (ii) holds by the last sentence of the previous paragraph.

To prove (ii)$\Rightarrow$(i), replace $S$ by $\varphi_{\nu}(S)$ (which is still a quasi-projective complex variety by the main result of \cite{BBTmixed}) and assume for contradictory $S = S^{\mathrm{Betti}}(1)$. All arguments in $\S$\ref{SectionBettiRank} can be applied to our $S$ until Lemma~\ref{PropFoliationDegLociSubset}. 
Then $S = E_1$, each fiber of $[p_{N_1}]|_S$ has $\BC$-dimension $\ge \frac{1}{2}\dim_{\BQ}(V\cap N) +1$ by definition of $E_1$. So $\dim S \ge \dim [p_{N_1}](S) +  \frac{1}{2}\dim_{\BQ}(V\cap N_1) +1$ by the Fiber Dimension Theorem. So $\dim S > $ the right hand side of \eqref{EqTrivialUpperBoundBettiRankBla} with $N = N_1$, contradiction to (ii). 

\subsection{Adelic line bundle and volume identity}
In \cite{Blo2}, Bloch constructed a biextension line bundle $L$ on $S$ defined over $K$. At each closed point $s\in S(\IQbar)$ where the Beilinson--Bloch height is well-defined, this line bundle has an adelic structure $\overline L_s$ over $K(s)$ whose degree gives the Beilinson--Bloch height \eqref{eq-BB}.  The archimedean part of this adelic structure has been studied by  Hain \cite{Ha90}. 
\begin{conj}\label{conj-adL}
The line bundle $L$ has a natural extension to a nef adelic line bundle $\overline L$ on $S$, such that its restriction to each $s\in S(\IQbar)$ where the Beilinson--Bloch height is well-defined is given by $\overline L_s$.
\end{conj}
By the arithmetic Hilbert--Samuel formula \cite[Thm.~5.2.2]{YZ21}, one consequence of this conjecture is the following volume identity for the generic fiber $\wt L$ of $\overline L$: For any archimedean place $v$ of $K$ and any subvariety $S'$ of $S_{v,\BC}$, we have
\begin{equation}\label{eq-volume}
\wh \vol (\wt L|_{S'})=\int _{S'(\BC)}c_1(\bar L_v)^{\wedge \dim S'}.
\end{equation}

For the Gross--Schoen and Ceresa cycles for any subvariety of $\sM_g$, we prove the existence of the extension in $\S$\ref{SubsectionConstrALB} and the volume identity as Theorem~\ref{PropHSforGS}, but not the nefness of the adelic line bundle in question.

\appendix
\renewcommand{\thesection}{\Alph{section}}
\setcounter{section}{0}

\section{Mixed Hodge structures and variations}\label{AppVMHS}
This Appendix collects some definitions about variations of mixed Hodge structures.
\subsection{Definitions}
Let $R$ be a subring of $\BR$.
\begin{defn}[pure Hodge structures]
Let $V$ be a free $R$-module of finite rank. An {\it $R$-pure Hodge structure} on $V$ of weight $n$ is a bigrading $V_{\BC} = \bigoplus_{p+q=n} V^{p,q}$ such that $V^{q,p} = \overline{V^{p,q}}$ for all $p,q$.
\end{defn}
Pure Hodge structures can also be defined using the (decreasing) {\it Hodge filtration} $F^p V_{\BC} := \bigoplus_{r\ge p}V^{r,n-r}$; then the bigrading  is uniquely determined by $V^{p,n-p} = F^p V_{\BC} \cap \overline{F^{p+1} V_{\BC} }$.

\begin{defn}[mixed Hodge structures]  Let $M$ be a free $R$-module of finite rank. An {\it $R$-mixed Hodge structure} on $M$ is a triple $(M,W_{\bullet},F^{\bullet})$ consisting of two filtrations, an increasing filtration $W_\bullet$ on $M_{\BQ}$ (the {\em weight filtration}) and a decreasing filtration $F^\bullet$ on $M_{\BC}$ (the {\em Hodge filtration}), such that for each $k \in \BZ$, $\mathrm{Gr}_k^W M_{\BQ} = W_k/W_{k-1}$ is a $\BQ$-pure Hodge structure of weight $k$ for the filtration on $\mathrm{Gr}_k^W (M_\BC)$ deduced from $F^\bullet$. 
\end{defn}

For a mixed Hodge structure $(M,W_{\bullet},F^{\bullet})$, the numbers $k \in \BZ$ such that $\mathrm{Gr}_k^W M_{\BQ} \not=0$ are called its {\em weights}, and  
the numbers
$ h^{p,q}(M) = \dim_{\BC}
F^p \mathrm{Gr}^W_{p+q}(M_{\BC}) / F^{p+1} \mathrm{Gr}^W_{p+q}(M_{\BC})$
are called its {\em Hodge numbers} .


For each $n \in \BZ$, define $R(n)$ to be the pure Hodge structure on $R$ of weight $-2n$ such that $R(n)^{-n,-n} = \BC$ and $R(n)^{p,q}= 0$ for all $(p,q)\not=(-n,-n)$. 

A {\em polarization} on a pure Hodge structure $V$ of weight $n$ is a morphism of Hodge structures 
\[
Q \colon V_{\BQ} \otimes V_{\BQ} \longrightarrow \BQ(-n)
\]
such that 
 the Hermitian form on $V_{\BC}$ given by $Q(Cu,\bar{v})$ is positive-definite where $C$ is the Weil operator ($C|_{H^{p,q}} = i^{p-q}$ for all $p,q$).



\subsection{Mumford--Tate group}\label{SubsectionMTGroup}
Now, let us turn to a more group theoretical point of view on mixed Hodge structures. Let $\BS = \mathrm{Res}_{\BC/\BR}\BG_{\mathrm{m},\BC}$ be the Deligne torus, \textit{i.e.} the real algebraic group such that $\BS(\BR) = \BC^*$ and $\BS(\BC) = \BC^* \times \BC^*$, and that the complex conjugation on $\BS(\BC)$ sends $(z_1,z_2) \mapsto (\bar{z}_2, \bar{z}_1)$.

As for pure Hodge structures, mixed Hodge structures can also be equivalently defined in terms of {\em bigradings} by Deligne \cite[1.2]{PinkThesis}.  
Given a $\BQ$-vector space $M$ of finite dimension, a bigrading $M_{\BC} = \oplus_{p, q \in \BZ} I^{p,q}$ is  equivalent to 
a homomorphism $h \colon \BS_{\BC} \rightarrow \mathrm{GL}(M_{\BC})$. In particular, any mixed Hodge structure on $M$ defines a unique homomorphism $h \colon \BS_{\BC} \rightarrow \mathrm{GL}(M_{\BC})$ and we use $(M,h)$ to denote this mixed Hodge structure.

\begin{defn}
For any mixed Hodge structure $(M,h)$, its {\em Mumford--Tate group} is the smallest $\BQ$-subgroup $\mathbf{G}$ of $\mathrm{GL}(M_{\BQ})$ such that $h(\BS_{\BC}) \subseteq \mathbf{G}(\BC)$.
\end{defn}

\begin{lemma}\label{LemmaMTGroup}
Assume $(M,h)$ has weight $-1$ and $0$. Then $h$ is defined over $\BR$. 
\end{lemma}
\begin{proof}
Deligne's bigrading satisfies: $W_n M_{\BC} = \oplus_{p+q\le n}I^{p,q}$, $F^m M_{\BC} = \oplus_{p\ge m}I^{p,q}$, $I^{p,q} \equiv \overline{I^{q,p}} \bmod \bigoplus_{r<p,s<q} I^{r,s}$. To prove that $h$ is defined over $\BR$, it suffices to prove that $I^{p,q} = \overline{I^{q,p}}$ for all $p,q$.

By our assumption, $I^{p,q}\not= 0$ only if $p+q = -1$ or $0$. For $r<p$ and $s<q$, we have $r\le p-1$ and $s\le q-1$, and hence $r+s \le p+q-2$. So for any $p+q = 0$ or $-1$, we have $\bigoplus_{r<p,s<q}I^{r,s} \subseteq \bigoplus_{r+s\le -2} I^{r,s} = 0$. We are done. 
\end{proof}

\subsection{Variation of mixed Hodge structures} 
\begin{defn}\label{DefnVMHS}
Let $S$ be a connected complex manifold. A {\em variation of mixed Hodge structures (VMHS)} on $S$  is a triple $(\BM_{\BZ}, W_{\bullet}, \sF^{\bullet})$ consisting of
\begin{itemize}
\item[-] a local system $\BM_{\BZ}$ of  free $\BZ$-modules of finite rank on $S$ ,
\item[-] a finite increasing filtration ({\em weight filtration}) $W_{\bullet}$ of the local system $\BM := \BM_{\BZ}\otimes_{\BZ_S}\BQ_S$ by local subsystems,
\item[-] a finite decreasing filtration ({\em Hodge filtration}) $\sF^{\bullet}$ of the holomorphic vector bundle $\sM:= \BM_{\BZ} \otimes_{\BZ_S} \sO_S$  by holomorphic subbundles 
 \end{itemize}
 satisfying the following properties
 \begin{enumerate}
 \item[(i)] for each $s \in S$, the triple $(\BM_s, W_{\bullet}, \sF^{\bullet}_s)$ defines a mixed Hodge structure on $\BM_s$,
 \item[(ii)] the connection $\nabla \colon \sM \rightarrow \sM \otimes_{\sO_S}\Omega_S^1$ whose sheaf of horizontal sections is $\BM_{\BC}:=\BM_{\BZ}\otimes_{\BZ_S}\BC_S$ satisfies the {\em Griffiths' transversality} condition
 \[
 \nabla(\sF^p)\subseteq \sF^{p-1}\otimes\Omega_S^1.
 \]
 \end{enumerate}
\end{defn}
\noindent The weights and Hodge numbers of $(\BM_s, W_{\bullet}, \sF^{\bullet}_s)$ are the same for all $s \in S$. We call them the {\em weights} and the {\em Hodge numbers} of the VMHS $(\BM_{\BZ}, W_{\bullet}, \sF^{\bullet})$.

\smallskip
If there is only one $n\in \BZ$ such that $\mathrm{Gr}^W_n\BM \not=0$, then each fiber of this VMHS is a pure Hodge structure of weight $n$. In this case, the VMHS is said to be {\em pure}. More precisely, we have the following definition.
\begin{defn}
A {\em variation of Hodge structures (VHS) of weight $n$} on $S$ is a pair $(\BV_{\BZ},\sF^{\bullet})$ such that  $(\BV_{\BZ}, W_{\bullet}, \sF^{\bullet})$ is a VMHS, where $W_{\bullet}$ is the increasing filtration on $\BV:= \BV_{\BZ}\otimes_{\BZ_S}\BQ_S$ defined by $W_{n-1} =0$ and $W_n = \BV$.
\end{defn}
\noindent To each VMHS $(\BM_{\BZ}, W_{\bullet}, \sF^{\bullet})$ on $S$, we can associate variations of pure Hodge structures obtained from $\mathrm{Gr}_k^W\BM$.

 We shall use the following convention: For each $n \in \BZ$ and any VHS $(\BV_{\BZ},\sF^{\bullet}) \rightarrow S$, define $\BV_{\BZ}(n)$ to be the VHS $(\BV_{\BZ},\sF^{\bullet -n})$. In particular $\BZ(n)_S$ be the VHS on $S$ of weight $-2n$ such that $(\BZ(n)_S)_s = \BZ(n)$ for each $s \in S$. 

\begin{defn}
A {\em polarization} of VHS $(\BV_{\BZ},\sF^{\bullet})$ on $S$ of weight $n$ is a morphism of VHS $\BV \otimes \BV \rightarrow \BQ(-n)_S$ inducing on each fiber a polarization.

 We say that a VMHS $(\BM_{\BZ}, W_{\bullet}, \sF^{\bullet})$ is {\em graded-polarizable} if $\mathrm{Gr}_k^W\BM$ has a  polarization for each $k \in \BZ$.
\end{defn}

\def\cprime{$'$}


\vfill

\end{document}